\newtheorem{lemma}{Lemma}
\newtheorem{proposition}{Proposition}
\newtheorem{theorem}{Theorem}
\newtheorem{corollary}{Corollary}
\newtheorem{example}{Example}
\newtheorem{remark}{Remark}
\newenvironment{proof}{{\sc Proof:}}{{\hspace*{\fill} $\square$\\}}
\numberwithin{}{}
\newcommand{\X}{\ensuremath{\mathfrak{X}}}
\newcommand{\g}{\mathfrak{g}}
\newcommand{\tto}{\rightrightarrows}
\newcommand{\dto}{\dashrightarrow}
\newcommand{\rmap}{\to}
\newcommand{\diffto}{\xrightarrow{\raisebox{-0.2 em}[0pt][0pt]{\smash{\ensuremath{\sim}}}}}
\newcommand{\Lie}{\mathcal{L}}
\def\bea{\begin{eqnarray}}
\def\eea{\end{eqnarray}}
\def\bl{\begin{lemma}}
\def\el{\end{lemma}}
\def\br{\begin{remark}}
\def\er{\end{remark}}
\def\rr{\rightrightarrows}
\def\R{\mathbb{R}}
\def\C{C^\infty}
\def\s{\sigma}
\def\t{\tau}
\def\e{\epsilon}
\def\lef{\leftthreetimes}
\def\TT{\mathbb{T}}
\def\br{\bar{\rho}}
\newcommand{\Cour}[1]      {[\![#1]\!]}
\newcommand{\ale}[1]{{\tiny #1}\marginpar{*A} }
\def\GG{\mathbb{G}_k}
\def\AA{\mathbb{A}_k}
\def\MM{\mathbb{M}_k}
\begin{document}

\title{Local formulas for multiplicative forms}

\author{Alejandro Cabrera}
\affil{Departamento de Matem\'atica Aplicada, Instituto de Matem\'atica\\
Universidade Federal do Rio de Janeiro\\
 Caixa Postal 68530, Rio de Janeiro RJ 21941-909, Brasil\\ alejandro@matematica.ufrj.br}

\author{Ioan M\u{a}rcu\cb{t}}
\affil{Radboud University Nijmegen, IMAPP\\ 
6500 GL, Nijmegen, The Netherlands\\
i.marcut@math.ru.nl}
 
\author{Mar\'ia Amelia Salazar}
\affil{Instituto de Matem\'atica e Estat\'istica, GMA\\ 
Universidade Federal Fluminense\\ 
Rua Professor Marcos Waldemar de Freitas Reis s/n, Gragoat\'a, Niter\'oi, Rio de Janeiro, Brasil, 24.210-201\\ mariasalazar@id.uff.br}

\maketitle

\begin{abstract}
We provide explicit formulas for integrating multiplicative forms on local Lie groupoids in terms of infinitesimal data. Combined with our previous work \cite{love}, which constructs the local Lie groupoid of a Lie algebroid, these formulas produce concrete integrations of several geometric stuctures defined infinitesimally. In particular, we obtain local integrations and non-degenerate realizations of Poisson, Nijenhuis-Poisson, Dirac, and Jacobi structures by local symplectic, symplectic-Nijenhuis, presymplectic, and contact groupoids, respectively. 

%We provide explicit formulas for general multiplicative geometric structures on Lie gro-upoids in terms of the underlying infinitesimal data. Combined with our previous work \cite{love} which constructs a local Lie groupoid  %endowed with such a tubular neighborhood 
%out of a Lie algebroid with a compatible connection (called spray), we show that the corresponding explicit formulas produce concrete integrations of any geometry defined infinitesimally. As applications, we obtain several known and new results about integration and non-degenerate realization of Poisson, Nijenhuis-Poisson, Dirac, and Jacobi structures by local symplectic, symplectic-Nijenhuis, presymplectic, and contact groupoids, respectively.

%
%We give explicit universal formulas for integrating infinitesimally multiplicative forms on a Lie algebroid to multiplicative forms on a tubular neighborhood of the section in a Lie groupoid. In previous work \cite{love} we constructed a local Lie groupoid integrating a given Lie algebroid, with the aid of a Lie algebroid spray. The combination of these two construction, produces concrete integrations of several geometric structures: Poisson, Nijenhuis-Poisson, Dirac, and Jacobi structures to local symplectic, symplectic-Nijenhuis, presymplectic, and contact groupoids, respectively.
\end{abstract}

\tableofcontents

\section{Introduction}\label{sec:intro}

%%%%%%%%%% NEW
In recent years, the study of Poisson, symplectic and related geometries has been connected at a fundamental level to the underlying Lie theory of algebroids and groupoids \cite{CDW,CF1,BCWZ,CrainicZhu,BaGu}. In these contexts, Lie groupoids come endowed with additional geometric structures which are compatible with the groupoid structure, called \textbf{multiplicative}. A paradigmatic example of this interplay is the correspondence between Poisson manifolds and \textbf{symplectic groupoids} (\cite{CDW}):  Lie groupoids endowed with a multiplicative symplectic structure.

%%%%%

In general, a \textbf{multiplicative form} on a Lie groupoid is a differential form satisfying a certain cocycle-type condition. Multiplicative forms can be differentiated to the so-called \textbf{infinitesimally multiplicative} (or \textbf{IM}-) forms on the corresponding Lie algebroid, or more generally to {\bf Spencer operators} when the multiplicative form has coefficients in a representation. For source one-connected Lie groupoids, the differentiation procedure gives a one-to-one correspondence \cite{CamiloMarius,BC,Maria}. In this paper we prove the local version of this result, namely, differentiation gives a one-to-one correspondence between germs of multiplicative forms on a local Lie groupoid and IM-forms/Spencer operator on the corresponding Lie algebroid. Moreover, our main result consists in providing an explicit formula for an integration in terms of the IM-data, which depends on a chosen tubular neighborhood of the identity section of the groupoid (Theorem \ref{thm:tubular_Spencer}). In our previous work \cite{love}, we presented an explicit construction of a local Lie groupoid integrating a given Lie algebroid, called the spray groupoid, and in this case, the integration of IM-forms and Spencer operators procedure simplifies substantially (Theorem \ref{theorem:Spencer}).

We also apply these integration results in the context of  Poisson, Nijenhuis-Poisson, Dirac and Jacobi geometries, as mentioned above. The key point is that each of these geometries can be encoded by a Lie algebroid together with an IM-form. At the groupoid level, these correspond to geometric structures of a similar nature, which are multiplicative differential forms satisfying a specific non-degeneracy condition. 
By using the spray construction \cite{love}, and the integration procedure, we construct explicit local symplectic, symplectic-Nijenhuis, presymplectic, and contact groupoids; and since the source map of these local Lie groupoids is a morphism in the same category, we obtain concrete non-degenerate realizations for several types of structures: Poisson, Nijenhuis-Poisson (including holomorphic Poisson), generalized complex, twisted Dirac, and Jacobi. This yields new proofs of the main results in the recent works \cite{BX,CM,IP,Peta}.

\vspace*{0.3cm}

We explain now our main result in the case of the spray groupoid \cite{love} and for trivial coefficients (Theorem \ref{theorem:spray_no_coefficeints}). First, we recall some terminology and results from \cite{BC}. Let $(A,[\cdot,\cdot],\rho)$ be a Lie algebroid over a manifold $M$. An \textbf{IM-$k$-form} on $A$ consists of a pair of vector bundle maps
\[(l,\nu):A\rmap \wedge^{k-1}T^*M\oplus \wedge^kT^*M\]
satisfying, for all $a,b\in \Gamma(A)$, the compatibility relations
\begin{align}
\nonumber&\nu([a,b])=\mathcal{L}_{\rho(a)}\nu(b)-i_{\rho(b)}d\nu(a)\\
&l([a,b])=\mathcal{L}_{\rho(a)}l(b)-i_{\rho(b)}dl(a)-i_{\rho(b)}\nu(a)\label{eq:IM}\\
\nonumber&i_{\rho(a)}l(b)=-i_{\rho(b)}l(a).
\end{align}
Let $G\rr M$ be a Lie groupoid integrating $A$. The structure maps are denoted as follows: source and target maps $\sigma,\tau:G\to M$, unit map $u:M\to G$, inversion $\iota:G\to G$, and multiplication map $\mu:G_{\sigma}\times_{\tau}G\to G$. A \textbf{multiplicative $k$-form} on $G$ is a differential form $\omega\in \Omega^k(G)$ satisfying the compatibility relation
\begin{equation}\label{eq:M}
\mu^*\omega=\mathrm{pr}_1^*\omega+\mathrm{pr}_2^*\omega
\end{equation}
on the space of composable arrows $G_{\sigma}\times_{\tau}G$. Multiplicative $k$-forms can be differentiated to IM-$k$-forms: the \textbf{differentiation} of $\omega$ is the pair $(l,\nu)$ given by
\begin{equation}\label{eq:differentiation}
l(a):=u^*(i_{a}\omega), \ \ \ \nu(a):=u^*(i_{a}d\omega), \ \ \forall  \ a\in \Gamma(A).
\end{equation}
If the source fibers of $G$ are one-connected, then the differentiation map is a bijection between multiplicative forms on $G$ and $IM$-forms on $A$ \cite{CamiloMarius,BC}.

Not every Lie algebroid is integrable by a Lie groupoid, but every Lie algebroid is integrable by a local Lie groupoid \cite{CF1} (regarding local Lie groupoids, we use the conventions from \cite{love}). If $G$ is only a local Lie groupoid integrating $A$, one can still talk about local multiplicative forms, i.e.\ differential forms defined around the unit section of $G$ which satisfy (\ref{eq:M}) around the unit section, and  moreover, the differentiation operation is still defined and produces IM-forms on $A$. In this setting, our main result shows that differentiation gives a bijection from germs around the unit section of local multiplicative forms on $G$ to IM-forms on $A$, with the aid of an explicit formula. We explain this for spray groupoids. Recall from \cite{love}, that a \textbf{Lie algebroid spray} for $A$ is a one-homogenous vector field $V\in \mathfrak{X}(A)$ satisfying $dq(V_a)=\rho(a)$, for all $a\in A$, where $q:A\to M$ denotes the bundle projection. Using a spray $V$, in \cite{love} we build a local Lie groupoid $G_V\rr M$ integrating $A$, with total space an open neighborhood $G_V\subset A$ of the zero-section, with $u(m)=0_m$, with $\sigma=q|_{G_V}$, with $\tau=q\circ \phi_V^1$, and with $\iota=-\phi_V^{1}$, where $\phi_V^t$ denotes the local flow of $V$. To define the multiplication $\mu$, we first described the Maurer-Cartan form of $G_V$ which allows to write $\mu(a,b)$ as the solution to an ODE. We state now our main result in this particular case:
\begin{theorem}\label{theorem:spray_no_coefficeints}
Differentiation is a bijection between germs of locally defined multiplicative forms on the spray groupoid $G_V$ and IM-forms on its Lie algebroid $A$, with inverse $(l,\nu)\mapsto \omega$ is given by the integral formula:
\[\omega=\int_0^1(\phi_V^t)^*\Lambda_{(l,\nu)}\ dt,\]
where $\Lambda_{(l,\nu)}:=l^*(d\alpha^{k-1}_{\mathrm{can}})+ \nu^*(\alpha^{k}_{\mathrm{can}})$, and $\alpha^{p}_{\mathrm{can}}\in \Omega^p(\wedge^pT^*M)$ denotes the tautological $p$-form. % on $\wedge^pT^*M$. % and $\omega$ is defined in a neighborhood of the unit section. 
\end{theorem}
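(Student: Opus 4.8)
The plan is to reduce the spray-groupoid statement to the general tubular-neighborhood result (Theorem \ref{thm:tubular_Spencer}, invoked with trivial coefficients) and then identify the integrand. The key observation is that the spray groupoid $G_V\subset A$ already comes with a canonical tubular neighborhood of the unit section: the identification $G_V\hookrightarrow A = \nu(u(M))$ via $u(m)=0_m$, so the exponential-type map in the general formula degenerates to the flow $\phi_V^t$ of the spray. Thus I would first recall the statement of Theorem \ref{thm:tubular_Spencer} in the no-coefficients case, unwind what the abstract ingredients (the fiberwise-linear structure, the transgression, the tautological forms pulled back along the anchor-type bundle map) become when the tubular neighborhood is the linear structure of $A$ itself, and check that the abstract "$\Lambda$" there is exactly $\Lambda_{(l,\nu)}=l^*(d\alpha^{k-1}_{\mathrm{can}})+\nu^*(\alpha^{k}_{\mathrm{can}})$.

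Concretely I would carry out the following steps. First: establish injectivity of differentiation for germs — this is a soft argument, since two local multiplicative forms with the same IM-form agree on a neighborhood of $u(M)$ by the uniqueness half of the correspondence (the homotopy/source-fiber argument localizes because the source fibers of $G_V$ are connected open neighborhoods of the origin in the $\sigma$-fibers of $A$). Second: given $(l,\nu)$ an IM-form, set $\omega:=\int_0^1(\phi_V^t)^*\Lambda_{(l,\nu)}\,dt$ and verify it is a well-defined $k$-form on (a possibly shrunk) $G_V$; this uses that $\phi_V^t$ is defined for $t\in[0,1]$ on a neighborhood of the zero section, which is part of the spray-groupoid construction in \cite{love}. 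Third, and this is the technical heart: show $\omega$ is multiplicative and that its differentiation is $(l,\nu)$. For multiplicativity I would use the description of the multiplication $\mu$ of $G_V$ via its Maurer–Cartan form from \cite{love}: pulling back $\mu^*\omega - \mathrm{pr}_1^*\omega - \mathrm{pr}_2^*\omega$ and differentiating along the flow direction, one gets an ODE in $t$ whose initial condition vanishes and whose right-hand side vanishes precisely because $(l,\nu)$ satisfies the IM-equations \eqref{eq:IM}; here the homogeneity $dq(V_a)=\rho(a)$ is what converts the algebraic IM-relations into the differential identity needed. For the differentiation identity $u^*(i_a\omega)=l(a)$ and $u^*(i_a d\omega)=\nu(a)$, I would restrict to $t=0$ where $\phi_V^0=\mathrm{id}$, use that $\Lambda_{(l,\nu)}$ is built from the tautological forms so that $u^*i_a$ of it picks out exactly $l(a)$ and $\nu(a)$ by the defining property of $\alpha^p_{\mathrm{can}}$, and check that the $\int_0^1 dt$ does not spoil this because the other contributions vanish on $u(M)$.

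The main obstacle I anticipate is the multiplicativity verification: translating the cocycle condition \eqref{eq:M} into a statement that can be checked by an ODE argument requires a careful bookkeeping of how $\mu$, and hence $\phi_V^t$, interacts with the pullbacks $\mathrm{pr}_i^*$, and this is where the precise formulas for the Maurer–Cartan form of $G_V$ from \cite{love} must be brought in. A secondary subtlety is ensuring all objects are defined on a common neighborhood of $u(M)$ after the finitely many shrinkings, but since we are only claiming a bijection on germs this is harmless. If Theorem \ref{thm:tubular_Spencer} is already available in the form stated, the cleanest route is simply to specialize it: then the whole proof collapses to checking that, for the tubular neighborhood given by the vector bundle structure of $A$ together with the spray flow, the general formula's transgression integrand equals $\Lambda_{(l,\nu)}$ and the general "exponential" equals $\phi_V^t$ — a direct comparison of definitions rather than a fresh argument.
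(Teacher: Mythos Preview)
Your proposal is correct, and the ``cleanest route'' you identify at the end is exactly what the paper does: it deduces Theorem~\ref{theorem:spray_no_coefficeints} as the trivial-coefficients specialization of Theorem~\ref{thm:tubular_Spencer} (via Theorem~\ref{theorem:Spencer}), after observing that the identity map $A\dashrightarrow G_V$ is a tubular structure, that the associated $\lambda_t$ equals $\phi_V^t$ (by the Maurer--Cartan relation $\theta(\frac{d}{dt}ta)=\phi_V^t(a)$, equation~\eqref{eq:theta_Euler2}), and that the linear form $\Lambda_{(l,D)}$ in the trivial-coefficients case coincides with $\Lambda_{(l,\nu)}=l^*(d\alpha^{k-1}_{\mathrm{can}})+\nu^*(\alpha^{k}_{\mathrm{can}})$ (equation~\eqref{eq:lin_form_IM}).

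The one place where your sketch and the paper diverge is how the hard work behind Theorem~\ref{thm:tubular_Spencer} is organized. Your alternative plan attacks multiplicativity of $\omega$ head-on by an ODE argument on the $k$-form cocycle condition~\eqref{eq:M}, using the IM-equations~\eqref{eq:IM} to make the derivative vanish. The paper does not do this: it instead uses the embedding trick of \cite{BC,CDr,DrE}, realizing an $E$-valued multiplicative $k$-form on $G$ as a scalar multiplicative $0$-form (a cocycle) on the larger local groupoid $\GG=TG^{\oplus k}\oplus(G\lef E^*)$, and likewise the Spencer operator as an algebroid cocycle on $\AA$. This reduces the entire problem to the degree-zero case, which is handled by the elementary ODE argument of Subsection~\ref{subsection:cocyles}; the induced tubular structure on $\GG$ is computed in Lemma~\ref{lma:GGtub}. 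Your direct approach may well work, but the bookkeeping you flag as the ``main obstacle'' is precisely what the embedding trick sidesteps: once $\omega$ is encoded as a function, multiplicativity is a single scalar equation and the ODE is one-dimensional.
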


Finally, let us illustrate our result for the problem of locally integrating a Poisson manifold $(M,\pi)$. Using an ordinary connection $\nabla$ on $M$, we can construct the spray $V$ on $T^*M$ defined at $a\in T^*M$ as the horizontal lift of $\pi^{\sharp}(a)$ with respect to $\nabla$. The resulting \textbf{local symplectic groupoid} constructed here, which integrates the cotangent Lie algebroid $T^*_{\pi}M$, is given by
\[(G_V,\omega)\rr (M,\pi), \ \ \ \omega=\int_0^1(\phi_V^t)^*\omega_{0}\ dt,\]
where $G_V\subset T^*M$ is a neighborhood of $M$, and $\omega_{0}$ is the canonical symplectic structure on $T^*M$; thus $\omega$ is the multiplicative 2-form integrating the IM-2 form $(-\mathrm{id},0)$. Since the Maurer-Cartan form of $G_V$ is induced by the symplectic structure, the multiplication can be described more explicitely: $\mu(a,b)=k_1$, where $k_t\in T^*_{q(b)}M$, $t\in [0,1]$, is the solution of the ODE:
\[i_{\frac{dk_t}{dt}}\omega=-\tau^*(\phi_V^t(a)),\ \ k_0=b.\]

%{\bf Contents.}%Section \ref{sec:mult} discusses the integration of infinitesimal multiplicative forms and Spencer operators to multiplicative forms on the Spray groupoid (Theorems \ref{eq:mult-form} and \ref{theorem:Spencer}). Section \ref{sec:appl} contains the applications of these results to the local integration of various geometric structures.

%\medskip

\textbf{Acknowledgments.} The authors would like to thank Marius Crainic, Pedro Frejlich, Rui Loja Fernandes and Eckhard Meinrenken for useful discussions. I.M.\ was supported by the NWO Veni grant 613.009.031 and the NSF grant DMS 14-05671. M.A.S.\ was a Post-Doctorate at IMPA, funded by CAPES-Brazil, during part of this project, and was partly supported by the CNPq Universal grant 409552/2016-0. A.C. would like to thank CNPq and FAPERJ for financial support.

\section{Differentiation and integration of multiplicative forms}

\subsection{Local multiplicative forms with coefficients}% and IM-forms on Lie algebroids}

Let $G\rr M$ be a \textbf{local Lie groupoid}. We follow the conventions from \cite{love} about local Lie groupoids. In order to ease notation, we use a dashed arrow $\dto$ to indicate that the domain of a map is an open neighborhood of the unit section $u:M\to G$; for example, the structure maps are denoted by 
\[\textrm{source and target}\  \sigma, \tau:G\dto M, \ \  \textrm{inversion}\  \iota: G\dto G,  \ \ \textrm{multiplication}\  \mu:G_{\sigma}\times_{\tau}G\dto G,\]
and the axioms of a Lie groupoid are satisfied locally. 

The \textbf{Lie algebroid} $A\to M$ of $G$ is constructed exactly as in the case of ordinary Lie groupoids: as a vector bundle $A=\ker(d\sigma)|_M$, the anchor of $A$ is $\rho=d\tau|_A:A\to TM$, and the Lie bracket is obtained by identifying sections of $A$ with (germs of) right invariant vector fields on $G$.

The \textbf{Maurer-Cartan} form $\theta_G$ of $G$ refers to the vector bundle map covering the target map, given by right translation of vector field tangent to the source fibers to the Lie algebroid: 
\[\theta_G:T^\sigma G\to A, \ \ \theta_G\Big(\frac{d }{dt}g_t\big|_{t=0}\Big):=\frac{d}{dt}g_tg_0^{-1}\big|_{t=0}\in A_{\tau(g_0)},\]
where $T^{\sigma}G:=\ker (d\sigma)$, and $t\mapsto g_t$ is any path so that $\sigma(g_t)=x$ is fixed.

The \textbf{general linear groupoid} $GL(E)\rightrightarrows M$ of a vector bundle $p:E \to M$ consists of linear isomorphism $E_x\diffto E_y$, $x,y\in M$. A representation of a local Lie groupoid $G\rightrightarrows M$ on $E$ is defined as a local Lie groupoid map $F:G\dto GL(E)$ covering the identity map on $M$. Thus, for any arrow $g\in G$ (which is close enough to $M$), one has an associated linear isomorphism
\[F(g):E_{\sigma(g)} \diffto E_{\tau(g)}, \  e \mapsto g\cdot e,\]
satisfying $(gh)\cdot e=g\cdot(h\cdot e)$, for all composable arrows $(g,h)$ close enough to the identity section. 

Given a representation $E$ of $G$, a \textbf{local $E$-valued $k$-form} on $G$ is an ordinary differential $k$-form on an open neighborhood $U$ of $M$ in $G$ with values in the vector bundle $\sigma^*(E)$:
\[\omega \in \Omega^k(U; \sigma^*(E)|_U).\]
We say that $\omega$ is \textbf{multiplicative} if it satisfies the following equation
\begin{equation}\label{eq:multwE}
\mu^* (\omega_{gh})=h^{-1} \cdot\mathrm{pr}_1^* (\omega_g) +\mathrm{pr}_2^*(\omega_h),
\end{equation}
for $(g,h)$ in a neighborhood of $M$ in $G_{\sigma}\times_{\tau}G$. For the trivial representation on $E=\R\times M$, we recover the notion of multiplicative forms (\ref{eq:M}) without coefficients from the introduction. 

\subsection{Spencer operators}

A \textbf{representation} of a Lie algebroid $A$ over $M$ on the vector bundle $E$ is a flat $A$-connection $\nabla$ on $E$, i.e.\ a bilinear map 
\[\nabla:\Gamma(A)\times\Gamma(E)\rmap\Gamma(E), \ \ (a,e)\mapsto \nabla_ae\]
satisfying
\[\nabla_{fa}e=f\nabla_ae,\ \ \ \nabla_{a}fe=f\nabla_ae+\mathcal{L}_{\rho(a)}(f)e, \ \ \ \nabla_a\nabla_b-\nabla_b\nabla_a=\nabla_{[a,b]},\]
for all $f\in C^{\infty}(M)$, $a,b\in \Gamma(A)$ and $e\in \Gamma(E)$.

The infinitesimal version of $E$-valued multiplicative $k$-forms are \textbf{Spencer operators} of degree $k$ on a Lie algebroid $A$, with values in a representation $(E,\nabla)$ of $A$ \cite{Maria, Mtesis}. These consist of a vector bundle map $l$ and a linear operator $D$:
\[l:A\rmap \wedge^{k-1}T^*M\otimes E\ \ \ \ \ \ D:\Gamma(A)\rmap \Omega^k(M;E)\]
satisfying the Leibniz identity
\begin{equation}\label{eq:Leibniz}D(fa)=fD(a)+df\wedge l(a),\end{equation}
and the three compatibility conditions:
\begin{equation}\label{eq:compatibility}
\begin{split}
  D([a,b] ) = \Lie_{a}D(b) - \Lie_{b}D(a),\ \
  l([a,b]) = \Lie_{a}l(b) - i_{\rho(b)}D(a), \ \
  i_{\rho(a)}l(b) = -i_{\rho(b)}l(a),
 \end{split}
 \end{equation}
for all $a,b\in \Gamma(A)$, and $f\in \C(M)$; the operator $\Lie_{a}$ acts on $\Omega^k(M, E)$ by
\begin{equation}\label{eq:Lie}
\Lie_{a}\omega(v_1, \ldots, v_k) = \nabla_{a}(\omega(v_1, \ldots, v_k)) - \sum_{i=1}^k\omega(v_1, \ldots, [\rho(a),v_i], \ldots , v_k),\ \ \forall\ v_i\in \mathfrak{X}(M).
\end{equation}

In order to describe integration and differentiation operations, we introduce the \textbf{linear differential form} associated to a Spencer operators. Let $q_1,q_2:F_1,F_2\to N$ be two vector bundles. By a \textbf{linear $F_2$-valued $k$-form} on the manifold $F_1$ we mean a differential form $\Lambda\in \Omega^{k}(F_1;q_1^*(F_2))$ satisfying 
\begin{equation}
\label{eq:linear}m_t^*(\Lambda)=t\Lambda, \ \ \forall \ t\in \R,
\end{equation}
where $m_t:F_1\to F_1$ denotes the multiplication by $t\in \R$. The space of all linear $F_2$-valued differential forms will be denoted by
$\Omega^\bullet_{\mathrm{lin}}(F_1;F_2)\subset \Omega^\bullet(F_1;q_1^*F_2)$. 

To any $F_2$-valued linear differential $k$-form $\Lambda$ we associate a
pair $(l,D)$ consisting of a vector bundle map $l:F_1\to \wedge^{k-1}T^*N\otimes F_2$ and an operator $D:\Gamma(F_1)\to \Omega^k(N;F_2)$ via the formulas
\begin{equation}\label{eq:linear_Spencer}
l(a)=j^*(i_{\overrightarrow{a}}\Lambda), \ \ \ D(a)=a^*(\Lambda), \ \ \forall a\in \Gamma(F_1),
\end{equation}
where, $\overrightarrow{a}$ denotes $a$ regarded as a constant vertical vector field on $F_1$ and $j:N\to F_1$ denotes the zero-section. The pair $(l,D)$ satisfies the Leibniz identity (\ref{eq:Leibniz}); in fact, the following holds:

\begin{lemma} \label{lem:one_to_one}
The relation \eqref{eq:linear_Spencer} gives a one-to-one correspondence between:
\begin{itemize}
		\item pairs $(l,D)$ consisting of a vector bundle map $l:F_1\to \wedge^{k-1}T^*N\otimes F_2$ and an operator $D:\Gamma(F_2)\to \Omega^k(N;F_2)$ satisfying the Leibniz identity (\ref{eq:Leibniz});
		\item elements $\Lambda\in \Omega^k_{\mathrm{lin}}(F_1;F_2)$, i.e.\ $F_2$-valued linear differential forms on the manifold $F_1$.
\end{itemize}
\end{lemma}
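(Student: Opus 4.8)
The plan is to reduce the statement to a local normal form for linear forms on a vector bundle. Both sides of the asserted correspondence are local over $N$: linear $F_2$-valued forms on $F_1$, and pairs $(l,D)$ satisfying the Leibniz identity \eqref{eq:Leibniz}, restrict to open subsets of $N$, and the operations in \eqref{eq:linear_Spencer} are compatible with such restrictions (the constant vertical field $\overrightarrow{a}$, the zero-section $j$, and the section $a\colon N\to F_1$ all are). Hence it suffices to establish the bijection over trivializing opens $U\subseteq N$ and to check that the locally reconstructed forms glue.

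Over such a $U$, I would choose coordinates $x^i$ on $U$, a frame $(s_\alpha)$ of $F_1|_U$ with induced fiberwise-linear coordinates $y^\alpha$ (so $F_1|_U\cong U\times\R^r$ and $m_t(x,y)=(x,ty)$), and a frame of $F_2|_U$. A general $\Lambda\in\Omega^k(F_1|_U;q_1^*F_2)$ decomposes by the number $p$ of vertical differentials $\dd y^\alpha$ it contains; since $m_t^*\dd x^i=\dd x^i$, $m_t^*\dd y^\alpha=t\,\dd y^\alpha$, and $m_t^*$ multiplies a coefficient that is $y$-homogeneous of degree $d$ by $t^d$, the condition $m_t^*\Lambda=t\Lambda$ forces the $p$-component to have coefficients $y$-homogeneous of degree $1-p$. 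Smoothness rules out negative homogeneity, and homogeneity $0$ (resp.\ $1$) in $y$ means independent of $y$ (resp.\ linear in $y$); so only $p=0,1$ survive and
\[
\Lambda \;=\; \sum_\alpha y^\alpha\, q_1^*\omega_\alpha \;+\; \sum_\alpha \dd y^\alpha\wedge q_1^*\eta_\alpha,\qquad \omega_\alpha\in\Omega^k(U;F_2|_U),\ \ \eta_\alpha\in\Omega^{k-1}(U;F_2|_U).
\]
From this, for $a=\sum a^\alpha s_\alpha$, one reads off $D(a)=a^*\Lambda=\sum_\alpha a^\alpha\omega_\alpha+\sum_\alpha \dd a^\alpha\wedge\eta_\alpha$ and $l(a)=j^*(i_{\overrightarrow{a}}\Lambda)=\sum_\alpha a^\alpha\eta_\alpha$ (using $\overrightarrow{fa}=(q_1^*f)\,\overrightarrow{a}$, which also makes $l$ tensorial); in particular $\omega_\alpha=D(s_\alpha)$ and $\eta_\alpha=l(s_\alpha)$. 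These formulas make the Leibniz identity \eqref{eq:Leibniz} evident, and they give injectivity: if $(l,D)=(0,0)$ then all $\omega_\alpha,\eta_\alpha$ vanish, so $\Lambda=0$ on each such $U$, hence everywhere.

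For surjectivity, given $(l,D)$ satisfying \eqref{eq:Leibniz} I would define, on each trivializing $U$,
\[
\Lambda|_U:=\sum_\alpha y^\alpha\, q_1^*D(s_\alpha)+\sum_\alpha \dd y^\alpha\wedge q_1^*l(s_\alpha),
\]
and then verify this is independent of the frame: for $\tilde s_\lambda=\sum_\alpha h^\alpha_\lambda s_\alpha$ one has $y^\alpha=\sum_\lambda h^\alpha_\lambda\tilde y^\lambda$, $l(\tilde s_\lambda)=\sum_\alpha h^\alpha_\lambda l(s_\alpha)$ by tensoriality, and $D(\tilde s_\lambda)=\sum_\alpha h^\alpha_\lambda D(s_\alpha)+\sum_\alpha \dd h^\alpha_\lambda\wedge l(s_\alpha)$ by \eqref{eq:Leibniz}; substituting, together with $\dd y^\alpha=\sum_\lambda(h^\alpha_\lambda\,\dd\tilde y^\lambda+\tilde y^\lambda\,q_1^*\dd h^\alpha_\lambda)$, shows the expression is unchanged. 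Thus the $\Lambda|_U$ glue to a global $\Lambda\in\Omega^k(F_1;q_1^*F_2)$; it is linear since $q_1\circ m_t=q_1$ forces $m_t^*\Lambda=t\Lambda$, and by the computation of the previous paragraph it is sent back to $(l,D)$ under \eqref{eq:linear_Spencer}.

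I expect the frame-independence check in the last step to be the only point requiring care: it is pure bookkeeping, but it is precisely where the Leibniz identity and the tensoriality of $l$ are used. The conceptual core — and the part worth isolating cleanly — is the homogeneity/normal-form argument of the second paragraph.
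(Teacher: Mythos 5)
Your proof is correct and follows essentially the same route as the paper, which simply sketches the correspondence by writing $\Lambda$ in a local frame as $\sum_\alpha y^\alpha\,q_1^*D(s_\alpha)+\sum_\alpha \dd y^\alpha\wedge q_1^*l(s_\alpha)$. Your write-up supplies the details the paper leaves implicit (the homogeneity argument forcing this normal form and the frame-independence needed to glue), but the underlying argument is the same.
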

\begin{proof} We only sketch the proof by writing the correspondence in local coordinates. Let $x^i$ denote local coordinates on $U\subset N$, $f_j$ a frame of $F_1|_{U}$ with induced dual coordinates $y^j$ on the fibers of $F_1|_U$, and $r_l$ a local frame on $F_2|_U$. The correspondence is such that, if
	\[D(f_j)=u_{I_k,j}^{l}(x)dx^{I_k}\otimes r_l, \ \  l(f_j)=v_{I_{k-1},j}^{l}(x)dx^{I_{k-1}}\otimes r_l,\]
	where $I_p$ denotes an index set $I_p=\{i_1<\ldots<i_p\}$ and $dx^{I_p}:=dx^{i_1}\wedge\ldots \wedge dx^{i_p}$, then
	\[\Lambda=y^ju_{I_k,j}^{l}(x)dx^{I_k}\otimes r_l+v_{I_{k-1},j}^{l}(x)dy^j\wedge dx^{I_{k-1}}\otimes r_l.
	\]
	\end{proof}
	
For a Spencer operator $(l,D)$ of degree $k$ on a Lie algebroid $A$ with values in a representation $E$, we denote the corresponding \textbf{linear $E$-valued differential form} by 
\[\Lambda_{(l,D)}\in \Omega^k_{\mathrm{lin}}(A;E).\]

Let $E=\mathbb{R}\times M$ be the \textbf{trivial representation} (i.e.\ given by the anchor). Then the Leibniz identity (\ref{eq:Leibniz}) for a degree $k$ Spencer operator $(l,D)$ is equivalent to the fact that $D$ decomposes
\[D(a)=dl(a)+\nu(a),\ \ a\in \Gamma(A),\]
where $\nu:A\to \wedge^{k}T^*M$ is a vector bundle map. Writing the compatibility conditions \eqref{eq:compatibility} in terms of $(l,\nu)$, we obtain the relations (\ref{eq:IM}). Thus, Spencer operators on the trivial representation are the same as IM-forms on a Lie algebroid $A$, which were discussed in the introduction. The associated linear differential form will be denoted by
\[\Lambda_{(l,\nu)}:=\Lambda_{(l,D)}\in \Omega_{\rm{lin}}(A)\]
and can be written directly in terms of $(l,\nu)$ as follows:
\begin{equation}\label{eq:lin_form_IM}
\Lambda_{(l,\nu)}=dl^*(\alpha^{k-1}_{\mathrm{can}})+\nu^*(\alpha^k_{\mathrm{can}})\in \Omega^k_{\mathrm{lin}}(A),
\end{equation}
where $\alpha^{p}_{\mathrm{can}}\in \Omega^p(\wedge^pT^*M)$ denotes the tautological $p$-form on $\wedge^pT^*M$.

\subsection{Differentiation}

The \textbf{differentiation procedure} \cite{Mtesis}, which associates to a local $E$-valued multiplicative $k$-form $\omega$ on local Lie groupoid $G\rr M$ an $E$-valued Spencer operator $(l,D)$ of degree $k$ on its Lie algebroid $A$, is simply given by the relations
\begin{equation}\label{eq:Dfromomega}
l(a)=u^*(i_{a}\omega),\ \ \ D(a)=\frac{d}{dt}\big|_{t=0}u^*((\phi_{a^R}^t)^*\omega)
\end{equation}
for any $a\in\Gamma(A)$, where $a^R\in \Gamma(T^{\sigma}G)$ denotes the local right-invariant vector field on $G$ corresponding to $a$ which, using the Maurer-Cartan form, is given by $\theta_G(a^R)=a$. 
If $(l,D)$ is given by (\ref{eq:Dfromomega}), we say that $\omega$ \textbf{integrates} the Spencer operator $(l,D)$.

In the case of trivial coefficients $E=\mathbb{R}\times M$, using Cartan's formula $\mathcal{L}_{a^R}=di_{a^R}+i_{a^R}d$ we have that $D(a)=u^*(i_ad\omega)+dl(a)$, thus under the correspondence between Spencer operators and IM-forms, we recover the differentiation (\ref{eq:differentiation}) from the Introduction.

The linear form corresponding to the Spencer operator can be thought of as the linearization of the multiplicative form. To make this precise, we recall some terminology from \cite{love}.  A \textbf{tubular structure} on a local Lie groupoid $G\rr M$ with Lie algebroid $A$ is a (locally defined) tubular neighborhood of $M$ in $G$ along the source fibers, i.e.\ an open embedding of bundles
\begin{equation}\label{diagram:tubular}
\xymatrixrowsep{0.4cm}
\xymatrixcolsep{1.2cm}
\xymatrix{
 A \ar@{-->}[r]^{\varphi}\ar[d]_{q} & G\ar[d]^{\sigma} \\
 M\ar[r]^{\mathrm{id}_M} & M},\end{equation}
such that: $\varphi(0_x)=x$, for $x\in M$, and
$\frac{d}{dt}\varphi(ta)\big|_{t=0}=(0,a)\in T_xM\oplus A_x=T_{x}G,$
for $a\in A_x$.

\begin{lemma}\label{lemma:linearization_is_linearization}
Let $\omega$ be a local $E$-valued multiplicative $k$-form on the local Lie groupoid $G\rr M$, and let $(l,D)$ be the Spencer operator on the Lie algebroid $A$ of $G$ to which $\omega$ differentiates. For any tubular structure $\varphi:A\dto G$, the linear form associated with $(l,D)$ is given by:
\[\Lambda_{(l,D)}=\frac{d}{d t}m_{t}^*(\varphi^*(\omega))\big|_{t=0}\in \Omega_{\mathrm{lin}}^k(A;E),\]
where $m_{t}:A\to A$ denotes multiplication by $t\in \R$.
\end{lemma}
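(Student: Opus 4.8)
The statement asserts that the linear form $\Lambda_{(l,D)}$ attached to the differentiated Spencer operator coincides with the first-order term in $t$ of the pulled-back form $m_t^*(\varphi^*\omega)$. Since, by Lemma \ref{lem:one_to_one}, a linear $E$-valued form is completely determined by the pair $(l,D)$ it induces via \eqref{eq:linear_Spencer}, the plan is to verify that $\Lambda':=\frac{d}{dt}\big|_{t=0}m_t^*(\varphi^*\omega)$ is itself a linear form and that it induces precisely the pair $(l,D)$ coming from $\omega$ via the differentiation formulas \eqref{eq:Dfromomega}. The identification then follows from the uniqueness in Lemma \ref{lem:one_to_one}.

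First I would check that $\Lambda'$ is linear, i.e.\ $m_s^*\Lambda'=s\,\Lambda'$ for all $s$. This is the elementary observation that $m_s^*\circ\frac{d}{dt}\big|_{t=0}m_t^*=\frac{d}{dt}\big|_{t=0}m_{st}^*=s\cdot\frac{d}{dt}\big|_{t=0}m_t^*$ applied to $\varphi^*\omega$, using $m_t\circ m_s=m_{ts}$ and the chain rule; one must also note that $\Lambda'$ indeed takes values in $q^*(E)$, which holds because $\varphi$ covers $\mathrm{id}_M$ along the source map and $\omega$ has values in $\sigma^*(E)$, so $\varphi^*\omega$ has values in $q^*(E)$ and the $t$-derivative at $t=0$ stays in that bundle (the scaling $m_t$ is fiberwise over $M$).

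Next, the heart of the argument: compute the pair $(l',D')$ that $\Lambda'$ induces through \eqref{eq:linear_Spencer}, namely $l'(a)=j^*(i_{\overrightarrow{a}}\Lambda')$ and $D'(a)=a^*(\Lambda')$, and match them with \eqref{eq:Dfromomega}. For $D'$: pulling back $\Lambda'$ along a section $a:M\to A$ gives $a^*\Lambda'=\frac{d}{dt}\big|_{t=0}a^*m_t^*\varphi^*\omega=\frac{d}{dt}\big|_{t=0}(ta)^*\varphi^*\omega=\frac{d}{dt}\big|_{t=0}(\varphi\circ ta)^*\omega$, and the curve $t\mapsto \varphi(ta)$ is, by the defining properties of a tubular structure, a path of arrows through the unit $u(x)$ whose velocity at $t=0$ along the source fibers is the constant vertical vector $a$; this is precisely the data that computes $u^*\big(\tfrac{d}{dt}\big|_{t=0}(\phi_{a^R}^t)^*\omega\big)$ up to reparametrizing by the right-invariant flow. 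Here one uses that $\phi^t_{a^R}$ and $t\mapsto \varphi(ta)$ agree to first order at the identity (both project to $0\oplus a$ in $T_{u(x)}G$), and that the first-order term of $m_t^*(\varphi^*\omega)$ only sees this first-order jet. For $l'$: evaluating $i_{\overrightarrow a}\Lambda'$ at a point $0_x$ of the zero section, one finds $j^*(i_{\overrightarrow a}\Lambda')=i_a(\varphi^*\omega)|_{u(x)}=u^*(i_a\omega)$, since $d\varphi$ at $0_x$ maps the constant vertical $a$ to $(0,a)\in T_{u(x)}G$ by the tubular condition, and the scaling derivative contributes nothing extra at the zero section. Thus $(l',D')=(l,D)$.

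The main obstacle I expect is the careful bookkeeping in the $D$-computation: reconciling the reparametrization $t\mapsto \varphi(ta)$ (which is an arbitrary tubular path) with the right-invariant flow $\phi^t_{a^R}$ appearing in \eqref{eq:Dfromomega}, and confirming that taking $\frac{d}{dt}\big|_{t=0}$ of $m_t^*(\varphi^*\omega)$ depends only on the $1$-jet at $t=0$ of the chosen path and is insensitive to higher-order discrepancies between $\varphi$ and the exponential-type flow. This amounts to showing that $\frac{d}{dt}\big|_{t=0}u^*\big((\phi^t_{a^R})^*\omega\big)$ is computed correctly by any smooth path of arrows with the correct velocity $0\oplus a$ at $t=0$ — a statement that follows from $\omega$ being multiplicative (so that its pullback along the unit is governed by its linearization) together with standard arguments on jets of pulled-back forms. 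Once this reduction is in place, the remaining verifications are routine, and Lemma \ref{lem:one_to_one} closes the proof.
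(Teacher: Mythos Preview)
Your proposal is correct and follows essentially the same approach as the paper: show linearity of $\Lambda'$ via $m_s\circ m_t=m_{st}$, then invoke the bijection of Lemma~\ref{lem:one_to_one} and verify that $\Lambda'$ induces the pair $(l,D)$ by checking $j^*(i_{\overrightarrow{a}}\Lambda')=u^*(i_a\omega)$ and $a^*(\Lambda')=D(a)$. One small point: the obstacle you flag (matching $t\mapsto\varphi(ta)$ with $\phi^t_{a^R}\circ u$) does \emph{not} require multiplicativity of $\omega$; the paper observes that for any two families $\alpha_i^t:M\to G$ of sections of $\sigma$ with $\alpha_i^0=u$ and $\tfrac{d}{dt}\big|_{t=0}\alpha_i^t=a$, one has $\tfrac{d}{dt}\big|_{t=0}(\alpha_1^t)^*\omega=\tfrac{d}{dt}\big|_{t=0}(\alpha_2^t)^*\omega$ by a direct local-coordinate check, so your ``standard arguments on jets of pulled-back forms'' is the right justification and nothing deeper is needed.
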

\begin{proof}
Let $\Lambda:=\frac{d}{d t}m_{t}^*(\varphi^*(\omega))\big|_{t=0}$. First, note that $\Lambda$ is indeed a linear form:
\[m_s^*(\Lambda)=m_s^*\frac{d}{d t}m_{t}^*(\varphi^*(\omega))\big|_{t=0}=
\frac{d}{d t}m_{st}^*(\varphi^*(\omega))\big|_{t=0}=s\Lambda.\]
By the correspondence (\ref{eq:linear_Spencer}) from Lemma \ref{lem:one_to_one}, and the description of differentiation (\ref{eq:Dfromomega}), it suffices to check the following two relations:
\[j^*(i_{\overrightarrow{a}}\Lambda)=u^*(i_a\omega), \ \ \ a^*(\Lambda)=u^*(\mathcal{L}_{a^R}\omega), \ \ \forall\ a\in \Gamma(A).\]
Since $(m_t)_{*}(\overrightarrow{a})=t\overrightarrow{a}$, $m_t\circ j=j$, $d\varphi(\overrightarrow{a}|_{j(M)})=a|_{u(M)}$, and $\varphi\circ j=u$, we have that 
\[j^*(i_{\overrightarrow{a}}m_t^*(\varphi^*(\omega)))=j^*(m_t^*(t i_{\overrightarrow{a}}\varphi^*(\omega)))=j^*(t i_{\overrightarrow{a}}\varphi^*(\omega))=t j^*(\varphi^*(i_{a}\omega))=t u^*(i_a\omega),\]
which implies the first relation. Denote $\alpha_1^t=\phi^t_{a^R}\circ u$ and $\alpha_2^t=\varphi(ta)$.
Note that the families $\alpha^t_i:M\to G$ are sections of $\sigma$, i.e.\ $\sigma\circ \alpha_i^t=\mathrm{id}_M$, and they both satisfy $\alpha^0_i=u$ and $\frac{d}{dt}\alpha_i^t|_{t=0}=a$. It is easy to check (e.g.\ by a local calculation) that for any two such families of maps, we have 
\[\frac{d}{dt}\big|_{t=0}(\alpha_1^t)^*\omega=\frac{d}{dt}\big|_{t=0}(\alpha_2^t)^*\omega.\]
The left hand side of this equation is $\frac{d}{dt}|_{t=0}u^*(\phi^t_{a^R})^*\omega$, and since $\varphi\circ m_t\circ a=\alpha_2^t$, the right hand side is $a^*(\Lambda)$. This proves the second relation.
\end{proof}

\subsection{Integration}

We are ready to state the main result of this paper, which provides a formula for the inverse of the differentiation of multiplicative forms on a local Lie groupoid. Let $G\rr M$ be a local Lie groupoid, let $A$ be the Lie algebroid of $G$ and $\theta_G$ be the Maurer-Cartan form of $G$. Fix also a tubular structure $\varphi:A\dto G$ on $G$. The tubular structure induces a scalar multiplication on $G$ along the source fibers
\[\R\times G\dto G,\ \ (t,g)\mapsto tg:=\varphi(t\varphi^{-1}(g)),\]
which is defined on a neighborhood of $\R\times u(M)$ in $\R \times G$. Next, we consider the map
\begin{equation}\label{eq:lambda_t}
\lambda:\R\times G\dto A,\ \ (t,g)\mapsto \lambda_t(g):=\theta_G\big(\frac{d}{dt} tg\big).
\end{equation}
Note that $\lambda_t(g)\in A_{\tau(tg)}$. The following will be proven in the following subsections:

\begin{theorem}\label{thm:tubular_Spencer} Given a Spencer operator $(l,D)$ of degree $k$ on $A$ with coefficients in $E$, with corresponding linear form $\Lambda=\Lambda_{(l,D)}$, the formula
\begin{equation}\label{eq:mult-form-coeff}
g\mapsto \omega_g=\int_{0}^1 (tg)^{-1}\cdot \lambda_t^*\big(\Lambda_{\lambda_t(g)}\big)\ dt \in  \wedge^kT^*_gG\otimes E_{\sigma(g)}
\end{equation}
defines a local $E$-valued multiplicative $k$-form $\omega$ on $G$ which integrates $(l,D)$. Moreover, the germ around $M$ of such an integration is unique.
\end{theorem}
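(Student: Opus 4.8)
The plan is to verify three things: (i) that formula \eqref{eq:mult-form-coeff} actually defines a smooth $E$-valued $k$-form on a neighborhood of $M$, (ii) that it differentiates to the given Spencer operator $(l,D)$, and (iii) that it is multiplicative; uniqueness of the germ then follows from the injectivity of differentiation, which I expect to be established separately (or reduced to (ii) plus the linearity argument of Lemma \ref{lemma:linearization_is_linearization}). For (i), the integrand $(tg)^{-1}\cdot\lambda_t^*(\Lambda_{\lambda_t(g)})$ is a composition of smooth maps — the tubular scaling $t\cdot g$, the Maurer--Cartan-type map $\lambda$ of \eqref{eq:lambda_t}, the linear form $\Lambda$, and the representation $F$ — all defined on a neighborhood of $\mathbb{R}\times u(M)$; one shrinks the neighborhood of $M$ in $G$ so that $tg$ is defined for all $t\in[0,1]$, and smoothness of the integral in $g$ is standard differentiation under the integral sign. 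The key structural input is the ``linearity'' scaling behavior of $\Lambda$ under $m_t$, which will be used repeatedly to tame the $t$-dependence.

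For (ii), I would compute $u^*(i_a\omega)$ and $D(a)=\tfrac{d}{dt}|_{t=0}u^*((\phi_{a^R}^t)^*\omega)$ directly from \eqref{eq:mult-form-coeff}. Restricting to the unit section: at $g=u(m)$ one has $tg=u(m)$ for all $t$, so $\lambda_t(u(m))=0$, $(tg)^{-1}$ acts trivially, and the integrand collapses; evaluating $i_a$ of this on $u(M)$ and using $\Lambda_{(l,D)}$'s defining property $j^*(i_{\overrightarrow a}\Lambda)=l(a)$ (equation \eqref{eq:linear_Spencer}) should give $l(a)$ after the $\int_0^1 dt$ produces the correct normalization. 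The second relation is the more delicate one: one differentiates the pullback by the flow $\phi_{a^R}^t$ at $t=0$; here the idea is that, along the flow, the combination $(tg)^{-1}\cdot\lambda_t^*(\Lambda)$ is engineered precisely so that Lemma \ref{lemma:linearization_is_linearization} applies — i.e. $\Lambda_{(l,D)}$ is the linearization of $\omega$ via $\varphi$ — and conversely, checking that $\omega$ of \eqref{eq:mult-form-coeff} has $\varphi^*\omega$ with linearization $\Lambda$ recovers $D(a)=a^*(\Lambda)$ on the nose.

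For (iii), the multiplicativity equation \eqref{eq:multwE}, I would exploit the behavior of the scaling $t\cdot g$ and of $\lambda_t$ under the groupoid multiplication. The essential geometric fact is a ``cocycle'' identity for $\lambda$: expressing $\lambda_t(gh)$ in terms of $\lambda_t(g)$, $\lambda_t(h)$ and the adjoint action, which comes from the definition of the Maurer--Cartan form together with the compatibility of the tubular scaling with $\mu$ on the relevant open set. Given this, one pulls \eqref{eq:mult-form-coeff} back by $\mu$ and splits the integral, matching it term-by-term against $h^{-1}\cdot\mathrm{pr}_1^*\omega_g+\mathrm{pr}_2^*\omega_h$; the $\mathrm{pr}_2$-part comes out immediately, while the $\mathrm{pr}_1$-part requires the linearity rescaling of $\Lambda$ to absorb a Jacobian-type factor and the representation identity $(gh)^{-1}\cdot=h^{-1}\cdot g^{-1}\cdot$. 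I expect step (iii) — establishing the cocycle identity for $\lambda$ under multiplication and then bookkeeping the pullback-and-split computation — to be the main obstacle, since it is where the precise interaction between the tubular structure, the Maurer--Cartan form, and the groupoid product must all cooperate; the trivial-coefficients case in Theorem \ref{theorem:spray_no_coefficeints} (where $tg=\phi_V$-conjugate scaling and $\lambda_t$ relates to the spray flow) is the sanity check I would run the argument against first. Finally, uniqueness of the germ: any other local multiplicative $\omega'$ integrating $(l,D)$ has the same linearization by Lemma \ref{lemma:linearization_is_linearization}, and a connectedness/flow argument along source fibers — propagating agreement from $M$ outward using that $\omega-\omega'$ is multiplicative with vanishing differentiation — forces $\omega=\omega'$ near $M$.
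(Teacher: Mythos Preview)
Your plan for (i) is fine and your idea for (ii) is essentially correct, but step (iii) contains a genuine gap. You posit a ``cocycle identity for $\lambda$'' expressing $\lambda_t(gh)$ in terms of $\lambda_t(g)$, $\lambda_t(h)$ and an adjoint action, justified by ``the compatibility of the tubular scaling with $\mu$.'' No such compatibility exists: the tubular structure $\varphi:A\dto G$ is an \emph{arbitrary} tubular neighborhood of $M$ along the source fibers, so the scaling $t\cdot g=\varphi(t\varphi^{-1}(g))$ has no relation whatsoever to the multiplication. In general $t\cdot(gh)$ is not expressible in terms of $t\cdot g$ and $t\cdot h$, and hence $\lambda_t(gh)$ bears no usable relation to $\lambda_t(g)$ and $\lambda_t(h)$. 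Even in the spray case, where $\lambda_t=\phi_V^t$, the multiplication $\mu(a,b)$ is defined only as the time-$1$ solution of an ODE and $\phi_V^t(\mu(a,b))$ has no simple decomposition. So the ``pullback-and-split'' computation you describe cannot be carried out.

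The paper avoids this obstacle by an embedding trick: one builds the local VB-groupoid $\GG=TG\oplus\cdots\oplus TG\oplus(G\lef E^*)\rr \MM$ with Lie algebroid $\AA$, and observes (Proposition~\ref{prop:emb}) that $E$-valued multiplicative $k$-forms $\omega$ on $G$ correspond exactly to $\R$-valued cocycles $f_\omega$ on $\GG$, and Spencer operators $(l,D)$ to algebroid cocycles $\delta_{(l,D)}$ on $\AA$. The tubular structure $\varphi$ on $G$ induces one $\hat\varphi$ on $\GG$ (Lemma~\ref{lma:GGtub}), and the degree-zero integration formula of Section~\ref{subsection:cocyles} (which rests on the already-proven integration of Lie algebroid morphisms from \cite{love}) then produces a cocycle $\tilde f$ on $\GG$ integrating $\delta_{(l,D)}$. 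A short computation identifies $\tilde f=f_\omega$ with $\omega$ given by \eqref{eq:mult-form-coeff}; multiplicativity of $\omega$ and the fact that it differentiates to $(l,D)$ then follow \emph{for free} from Proposition~\ref{prop:emb}, without ever confronting the behavior of $\lambda_t$ under $\mu$. Uniqueness likewise reduces to uniqueness of the germ of the cocycle $f_\omega$, which is the degree-zero case. The upshot is that the hard step you identify is not done directly at all; it is exported to the (already known) integration of $0$-cocycles on a larger groupoid.
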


Note the following consequence of the theorem (see also \cite[Proposition 8]{L-BM}):
\begin{corollary}
Let $G\rightrightarrows M$ be a local Lie groupoid with Lie algebroid $A$, and $E$ a local representation of $G$. Differentiation gives a one-to-one correspondence between germs around $M$ of $E$-valued multiplicative forms and $E$-valued Spencer operators.
\end{corollary}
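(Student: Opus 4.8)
The plan is to prove Theorem \ref{thm:tubular_Spencer} in three parts, proceeding in the natural logical order: (1) the integral formula (\ref{eq:mult-form-coeff}) makes sense and defines a genuine (locally defined, smooth) $E$-valued $k$-form $\omega$ on a neighborhood of $M$ in $G$; (2) this $\omega$ satisfies the multiplicativity equation (\ref{eq:multwE}); and (3) $\omega$ differentiates back to the given Spencer operator $(l,D)$, after which uniqueness of the germ follows by a separate argument. For step (1), I would first note that, since $\varphi$ is an open embedding fixing $M$ with the stated first-order behaviour, the scalar multiplication $(t,g)\mapsto tg$ is defined and smooth on a neighborhood of $[0,1]\times u(M)$, hence on a neighborhood of $[0,1]\times K$ for any compact $K\subset M$; likewise $\lambda_t(g)=\theta_G(\tfrac{d}{dt}tg)$ is a smooth $A$-valued function of $(t,g)$, valued in $A_{\tau(tg)}$, with $\lambda_t(g)\to 0$ (in fact linearly) as $g\to u(M)$. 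The pullback $\lambda_t^*(\Lambda_{\lambda_t(g)})$ is a $k$-form on $G$ valued in $E_{\sigma(\lambda_t(g))}=E_{\tau(tg)}$ — here one must be slightly careful about base points — and the parallel transport factor $(tg)^{-1}\cdot$ using the representation $F:G\dto GL(E)$ brings everything back to $E_{\sigma(g)}$ since $\tau(tg)=\sigma((tg)^{-1})$. Integrating over $t\in[0,1]$ is then a legitimate operation on the finite-dimensional vector space $\wedge^kT_g^*G\otimes E_{\sigma(g)}$, and smoothness in $g$ follows from smoothness of the integrand and compactness of $[0,1]$.

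The technical heart of the argument is step (2), verifying (\ref{eq:multwE}). Here I expect the cleanest route is to reduce to a path/ODE statement. Fix composable $(g,h)$ near $M$; the curves $t\mapsto tg\cdot h$ and $t\mapsto th$ and $t\mapsto tg$ are the relevant one-parameter families, and multiplicativity of $\omega$ should be equivalent to an identity between the $t$-derivatives of $(t \cdot (g,h))\mapsto$ (the three terms of (\ref{eq:multwE})), evaluated along these families, integrated from $0$ to $1$. The key inputs are: the right-invariance built into the Maurer-Cartan form $\theta_G$ (which governs how $\lambda_t$ behaves under multiplication and under scalar multiplication along source fibers), the linearity property (\ref{eq:linear}) of $\Lambda$, and — crucially — the infinitesimal multiplicativity encoded in the compatibility conditions (\ref{eq:compatibility}) for $(l,D)$. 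Concretely, I would differentiate the proposed $\omega$-formula at a point, express everything via $\Lambda_{(l,D)}$ and the Spencer axioms, and show the discrepancy between $\mu^*\omega_{gh}$ and $h^{-1}\cdot\mathrm{pr}_1^*\omega_g+\mathrm{pr}_2^*\omega_h$ satisfies a linear homogeneous ODE in $t$ with zero initial condition at $t=0$ (both sides vanish at $t=0$ since $0\cdot g=u(\tau(g))$ and $\omega$ restricted there is controlled). This is the step where the Spencer compatibility conditions get used in an essential way; I anticipate the bookkeeping of base points and the representation-twisting factors $(tg)^{-1}\cdot(-)$ to be the main obstacle, since one is differentiating a product of a parallel-transport operator and a pulled-back form, so a Leibniz rule involving the flat $A$-connection $\nabla$ (and hence $\Lie_a$ as in (\ref{eq:Lie})) must be applied carefully.

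For step (3), computing the differentiation of the $\omega$ just constructed, I would use Lemma \ref{lemma:linearization_is_linearization}: since $\varphi$ is itself a tubular structure and under it the scalar multiplication $tg$ corresponds exactly to $m_t$ on $A$, the linearization $\frac{d}{dt}m_t^*(\varphi^*\omega)|_{t=0}$ should be computed directly from formula (\ref{eq:mult-form-coeff}). At $t$ near $0$ the parallel transport factor $(tg)^{-1}\cdot(-)$ is the identity to first order, $\lambda_t\approx$ the identification $A\hookrightarrow G$ via $\varphi$ to first order, and $\Lambda_{\lambda_t(g)}$ scales linearly; a short computation should give that the linearization of $\omega$ equals $\Lambda$, and then Lemma \ref{lem:one_to_one} (the one-to-one correspondence between linear forms and Leibniz pairs) together with the fact that differentiation is recovered from the linearization identifies the Spencer operator of $\omega$ with $(l,D)$. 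Finally, for uniqueness of the germ: if $\omega, \omega'$ are two local multiplicative $k$-forms with the same differentiation, then $\eta=\omega-\omega'$ is multiplicative and differentiates to zero, so by Lemma \ref{lemma:linearization_is_linearization} its linearization along $\varphi$ vanishes; one then argues that a multiplicative form whose linearization is zero must have vanishing germ around $M$ — this is itself a short ODE/homogeneity argument using the scalar multiplication along source fibers and multiplicativity, exactly dual to the construction, so that $m_t^*(\varphi^*\eta)$ satisfies a linear homogeneous equation forcing it to be $0$ for all small $t$, hence $\eta\equiv 0$ near $M$. The Corollary is then immediate: differentiation is injective on germs by uniqueness and surjective by the explicit formula (\ref{eq:mult-form-coeff}).
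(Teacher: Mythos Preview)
Your proposal outlines a direct attack on Theorem \ref{thm:tubular_Spencer}, from which the Corollary follows. This is a viable strategy, but it is \emph{not} the route the paper takes, and the difference is worth noting.

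The paper's proof proceeds by an \emph{embedding trick}: it builds an auxiliary local VB-groupoid $\GG = (TG)^{\oplus k}\oplus(G\lef E^*)\rightrightarrows \MM$ with Lie algebroid $\AA$, and identifies (via Proposition \ref{prop:emb}) $E$-valued multiplicative $k$-forms on $G$ with \emph{scalar} multiplicative $0$-forms (cocycles) on $\GG$, and Spencer operators with algebroid cocycles on $\AA$. The theorem is then reduced to the already-established degree-zero case (Section \ref{subsection:cocyles}), where integration and uniqueness are a one-line ODE argument. The tubular structure on $G$ induces one on $\GG$ (Lemma \ref{lma:GGtub}), and a short computation shows that the cocycle-integration formula on $\GG$ unpacks to exactly (\ref{eq:mult-form-coeff}). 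Multiplicativity and uniqueness are thus obtained \emph{for free}, and the Spencer compatibility conditions (\ref{eq:compatibility}) enter only through Proposition \ref{prop:emb}(\ref{p2}).

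Your approach, by contrast, stays at the level of $k$-forms on $G$ throughout. Step (1) is fine. Step (3) via Lemma \ref{lemma:linearization_is_linearization} is reasonable. The real burden is your step (2): verifying (\ref{eq:multwE}) directly requires tracking simultaneously the Maurer-Cartan form, the scalar multiplication, the parallel transport in $E$, and the linearity of $\Lambda$, and then invoking the Spencer axioms (\ref{eq:compatibility}) at exactly the right moment. This can be made to work, but the bookkeeping is substantial --- precisely what the embedding trick is designed to avoid. Your uniqueness step (4) is also only sketched: knowing that the linearization of $\eta=\omega-\omega'$ vanishes is a first-order statement, and promoting it to vanishing of the germ requires deriving from multiplicativity a genuine ODE for $t\mapsto \eta_{tg}$ (essentially the differentiated form of (\ref{eq:multwE}) along the curve $t\mapsto(tg\cdot(sg)^{-1},sg)$), which you gesture at but do not write down. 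In the paper's approach this ODE is literally the cocycle ODE on $\GG$ and needs no separate derivation.

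In summary: your plan is correct in spirit and would yield an independent, more elementary (if longer) proof; the paper's reduction to cocycles on $\GG$ trades the direct verification for a structural argument that makes both multiplicativity and uniqueness immediate.
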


\subsection{The case of cocycles}\label{subsection:cocyles}

We discuss now Theorem \ref{thm:tubular_Spencer} in the case of cocycles and trivial coefficients. A \textbf{local cocycle} on a local Lie groupoid $G\rr M$ is a degree zero multiplicative form on $G$, i.e. a smooth map $f:G\dto \R$ satisfying
\[f(gh)=f(g)+f(h),\]
for all arrows $g$ and $h$ closed enough to the identities. This is equivalent to $G$ being a Lie groupoid homomorphism. A \textbf{cocycle} on the Lie algebroid $A$ is a degree zero IM-form on $A$, which is given by a linear map $\delta:A\to \R$ satisfying
\[\delta([a,b])=\mathcal{L}_{\rho(a)}\delta(b)-\mathcal{L}_{\rho(b)}\delta(a),\]
for all $a,b\in \Gamma(A)$. This is equivalent to $\delta$ being a Lie algebroid homomorphism. 
In this case, differentiation becomes $\delta(a)=i_a(df)$, and $\Lambda_{(0,\delta)}=\delta\in \Gamma (A^*)$ is the underlying (1-)cocycle.

By \cite[Theorem 2.4]{love}, Lie algebroid maps can always be integrated to local Lie groupoid maps, and the germ of the integration around the unit section is unique. Moreover, \cite[Theorem 2.4]{love} gives the following recipe to construct the integration of morphisms. Namely, if $f:G\dto \R$ integrates $\delta:A\to \R$, then for $g\in G$, we have that $f(tg)=x_t$ is determined by the ODE
\[x_0=0, \ \ \frac{d}{dt}x_t=\delta\Big(\theta_G\Big(\frac{d}{dt}tg\Big)\Big),\]
which has as solution 
\[x_t=\int_{0}^t\delta(\lambda_s(g))\ ds.\]
Thus $f(g)=\int_{0}^1\delta(\lambda_t(g))dt$, and so this proves Theorem \ref{thm:tubular_Spencer} in degree zero, for trivial coefficients.

%%%%%%%%%%%%%%%%%%%%%%%%%%%%%%%%%%%%%%%%%%%%%%%%%%%%%%%
\subsection{Forms as cocycles on $\GG$}
Following \cite{BC,CDr,DrE}, we will prove Theorem \ref{thm:tubular_Spencer} by embedding degree $k$ multiplicative forms and Spencer operators as cocycles on suitable larger groupoids $\GG$ and algebroids $\AA$, respectively, which we now introduce.

The first ingredient is the tangent lift construction. Given a local Lie groupoid $G\rightrightarrows M$ , the application of the tangent functor to its structure maps defines a local Lie groupoid
\[TG \rightrightarrows TM\]
called the local \textbf{tangent groupoid} associated to $G$. Note that $TG$ is more than a local Lie groupoid, because its structure maps are vector bundle maps, covering the structure maps of $G$, and their domains are vector bundles over the domains of the structure maps of $G$; in fact, such an object will be called a \textbf{local VB-groupoid}, following the terminology of \cite{Raj}, see also \cite{Mac}. The infinitesimal counterpart of this construction is the \textbf{tangent Lie algebroid} structure on $dq:TA \to TM$ associated to a Lie algebroid $q:A \to M$ (see \cite{Mac} for further information).

If $A$ is the Lie algebroid of $G$, i.e.\ $A=T^{\sigma}G|_M$, then the Lie algebroid of $TG\rr TM$ is naturally isomorphic to $TA\to TM$ (see \cite{Mac}). The identification between the vector bundles $T^{d\sigma}(TG)|_{TM}\to TM$ and $TA=T(T^{\sigma}G|_M)\to TM$ is realized by the natural involution of $T(TG)$ which switches second order derivatives:
\begin{equation}\label{eq:can_involution}
\psi:T^{d\sigma}\big(TG\big)|_{TM}\diffto T\big(T^{\sigma}G|_{M}\big), \ \ \ \ \frac{d}{d t}\frac{d}{d \e} g_{\e} (t)\mapsto\frac{d}{d \e}\frac{d}{d t} g_{\e}(t),
\end{equation}
where $g_{\e}(t)\in G$ is a smooth two-parameter family, such that $\sigma(g_{\e}(t))=x_{\e}=g_{\e}(0)$. 
For later use, we record the following characterization of the Maurer-Cartan form on $TG$.

\begin{lemma}\label{lema:MC_TG}
The Maurer-Cartan form of $TG$ is given by:
\[\theta_{TG}\Big(\frac{d}{d t}\frac{d}{d \e} g_{\e} (t)\Big)=\frac{d}{d \e}\theta_G\Big(\frac{d}{d t} g_{\e} (t)\Big),\]
where $g_{\e}(t)\in G$ is a smooth two-parameter family, such that $\sigma(g_{\e}(t))=x_{\e}$ is independent of $t$.
\end{lemma}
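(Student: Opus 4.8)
The plan is to reduce the statement to the very definition of the Maurer--Cartan form, exploiting that the tangent functor commutes with the operations used to build $\theta_G$. Recall that for a local Lie groupoid $H \rr N$ the Maurer--Cartan form sends $\frac{d}{dt}h_t|_{t=0} \in T^\sigma H$ (a path $t\mapsto h_t$ with $\sigma(h_t)$ fixed) to $\frac{d}{dt}(h_t h_0^{-1})|_{t=0}$ in the Lie algebroid. Applying this to $H = TG$, a tangent vector in $T^{d\sigma}(TG)$ over a point $\frac{d}{d\e}g_\e(0)|_{\e=0}\in TG$ is represented by a curve $t\mapsto \frac{d}{d\e}g_\e(t)|_{\e=0}$ in $TG$ along which $d\sigma$ is constant; this constancy is exactly the hypothesis $\sigma(g_\e(t)) = x_\e$ independent of $t$. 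So the input datum is precisely the two-parameter family $g_\e(t)$ appearing in the statement, up to the switch \eqref{eq:can_involution}.

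First I would compute $\theta_{TG}$ of this class directly from the definition. The multiplication on $TG$ is $T\mu$ and the inversion is $T\iota$, so $\frac{d}{d\e}g_\e(t)|_{\e=0}$ times the inverse of $\frac{d}{d\e}g_\e(0)|_{\e=0}$ is, by functoriality of $T$ applied to the map $g\mapsto \mu(g, \iota(g_\e(0)))$ — or more precisely to the partial multiplication — the tangent vector $\frac{d}{d\e}\big(g_\e(t)\, g_\e(0)^{-1}\big)|_{\e=0}$. Taking the $t$-derivative at $t=0$, the defining formula for $\theta_{TG}$ gives
\[
\theta_{TG}\Big(\tfrac{d}{dt}\tfrac{d}{d\e}g_\e(t)\Big) = \tfrac{d}{dt}\Big|_{t=0}\tfrac{d}{d\e}\Big|_{\e=0}\big(g_\e(t)\, g_\e(0)^{-1}\big).
\]
Now I would switch the order of the two derivatives (this is legitimate by smoothness / equality of mixed partials, and it is exactly the involution $\psi$ intertwining the two identifications of the algebroid of $TG$), obtaining $\frac{d}{d\e}|_{\e=0}\frac{d}{dt}|_{t=0}\big(g_\e(t) g_\e(0)^{-1}\big)$. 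For each fixed $\e$, the inner expression $\frac{d}{dt}|_{t=0}\big(g_\e(t)g_\e(0)^{-1}\big)$ is by definition $\theta_G\big(\frac{d}{dt}g_\e(t)|_{t=0}\big)$, and the path $t\mapsto g_\e(t)$ does lie in a single $\sigma$-fiber since $\sigma(g_\e(t)) = x_\e$. Hence the whole thing equals $\frac{d}{d\e}|_{\e=0}\theta_G\big(\frac{d}{dt}g_\e(t)\big)$, which is the claimed formula.

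The only genuinely delicate point is bookkeeping of the identifications: one must check that the two ways of viewing a vector in the Lie algebroid of $TG$ — as an element of $T^{d\sigma}(TG)|_{TM}$ and as an element of $TA = T(T^\sigma G|_M)$ via $\psi$ — are matched correctly, so that ``swap the $t$- and $\e$-derivatives'' is not an illegal move but precisely the content of \eqref{eq:can_involution}. I expect this to be the main (modest) obstacle; once it is set up, the computation is a one-line application of the definition of the Maurer--Cartan form together with the naturality of $T$ with respect to $\mu$ and $\iota$. A local-coordinate verification, in the spirit of the sketch proof of Lemma \ref{lem:one_to_one}, can be supplied as an alternative if one prefers to avoid the abstract manipulation of iterated tangent bundles.
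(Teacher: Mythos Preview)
Your proposal is correct and follows essentially the same route as the paper's proof: unwind the definition of $\theta_{TG}$ using that the structure maps of $TG$ are $d\mu$ and $d\iota$, then swap the $t$- and $\epsilon$-derivatives via the canonical involution $\psi$ of \eqref{eq:can_involution}, and recognize the inner expression as $\theta_G$. The paper's version is slightly tighter in that it displays $\psi$ explicitly in the chain of equalities (writing $\theta_{TG}(\ldots)=\psi\circ\frac{d}{ds}\big|_{s=t}d\mu(\ldots)$ before swapping), whereas you first write down the double derivative and only afterwards invoke $\psi$; but this is a cosmetic difference, and you correctly flag the bookkeeping of the identification as the one point requiring care.
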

\begin{proof}
We have that
\begin{align*}
\theta_{TG}\Big(\frac{d}{d t}\frac{d}{d \e} g_{\e} (t)\Big)&=\psi\circ \frac{d}{d s}\big|_{s=t}d\mu\Big(\frac{d}{d \e}g_{\e}(s),d\iota\big(\frac{d}{d \e}g_{\e}(t)\big)\Big)=
\psi\circ \frac{d}{d s}\big|_{s=t}\frac{d}{d \e}\mu\big(g_{\e}(s),\iota(g_{\e}(t))\big)=\\
&=\frac{d}{d \e}\frac{d}{d s}\big|_{s=t}\mu\big(g_{\e}(s),\iota(g_{\e}(t))\big)=\frac{d}{d \e}\theta_G\Big(\frac{d}{d t} g_{\e} (t)\Big).
\end{align*}
\end{proof}

\smallskip

The second ingredient is given by the action construction on a given representation. Let $G\rr M$ be a local Lie groupoid, and let $E$ be a representation of $G$. The dual representation of $E$ is defined by the action of $G$ on $E^*$ given by $(g\cdot \xi)(e):=\xi(g^{-1}\cdot v)$, for $\xi\in E^*_{\sigma(g)}$, $v\in E_{\tau(g)}$. The associated {\bf local action groupoid} $G\lef E^*=G\times_{M}E^* \rightrightarrows E^*$ is defined by the structure maps:
\[\widetilde{\sigma}(g,\xi)=\xi, \ \widetilde{\tau}(g,\xi)=g\cdot\xi,\ \widetilde{u}(\xi)=(u(p(\xi)),\xi), \ \widetilde{\mu}((g,h\cdot \xi),(h,\xi))=(gh,\xi), \ (g,\xi)^{-1}=(g^{-1},g\cdot\xi).\]
Infinitesimally, let $A$ be the Lie algebroid of $G$ and $\nabla$ the $A$-connection on $E$ differentiating the representation. The Lie algebroid of $G\lef E^*$ is the \textbf{action Lie algebroid} $A\lef E^*=A\times_{M}E^* \to E^*$; its structure maps are determined by two conditions: the canonical inclusion $\Gamma(A)\subset \Gamma(A\lef E^*)$ preserves the Lie bracket, and the anchor $\widetilde{\rho}$ satisfies
\begin{equation}\label{eq:rho_tilde}
\mathcal{L}_{\widetilde{\rho}(a)}\widetilde{e}=\widetilde{\nabla_ae}, \ \ \forall\ a\in \Gamma(A), \ e\in \Gamma(E),
\end{equation}
where for $f\in \Gamma(E)$, we denote by $\widetilde{f}\in C^{\infty}(E^*)$ the corresponding linear function on $E^*$.
Notice that $G\lef E^*$ is a local VB-groupoid and that $A\lef E^*$ is its VB-algebroid.

By taking sums of the underlying local VB-groupoid structure maps (see e.g.\ \cite{BC}), we get a local (VB-)groupoid structure on 
\[\GG := \overbrace{TG \oplus \cdots \oplus TG}^k \oplus (G\lef E^*) \rightrightarrows \MM:=\overbrace{TM\oplus\ldots \oplus TM}^k \oplus E^*.\]
Its algebroid is given by the following sum of VB-algebroids 
\[\AA := \overbrace{TA \oplus \cdots \oplus TA}^k \oplus (A\lef E^*) \rightarrow \MM,\]
where in the sum, we regard $A\lef E^*$ as a (pull-back) bundle over $A$.
The algebroid structure can be found in \cite{BC} for trivial representations and in \cite{CDr} in the case of non-trivial $E$.
The isomorphism between the Lie algebroid of $\GG$ and $\AA$ is given by $\psi \oplus \dots \oplus \psi \oplus id_{E^*}$.

The embedding trick (\cite{BC,CDr}) consists of regarding $E$-valued $k$-forms on $G$ and $A$ as functions on $\GG$ and $\AA$, as follows. Given an $E$-valued $k$-form $\omega$ on $G$, we consider the induced function (or local 1-cochain)  $f_\omega: \GG \dto \R$ defined by
\begin{equation}\label{eq:fom}
 f_\omega(v_1,\dots,v_k,\xi) = \xi(\omega(v_1,\dots,v_k)), \ \ v_i \in T_g G, \xi \in E^*_{\s(g)}.
 \end{equation}
Similarly, given a linear $E$-valued $k$-form $\Lambda_{(l,D)} \in \Omega^k_{\mathrm{lin}}(A,E)$ defined by a pair $(l,D)$ satisfying the Leibniz property (c.f.\ Lemma \ref{lem:one_to_one}), we define the induced function (or algebroid 1-cochain) $\delta_{(l,D)}: \AA \to \R$ by
\begin{equation}\label{eq:defdelt}
\delta_{(l,D)}(v_1,\dots,v_k,\xi) = \xi(\Lambda_{(l,D)}(v_1,\dots,v_k)), \ \ v_i \in T_a A, \xi \in E^*_{p(a)}.
\end{equation}
Note that the linearity of $\Lambda_{(l,D)}$ implies that $\delta_{(l,D)}$ is linear for the projection $\AA \to \MM$, so that it can be seen as a section of $\AA^* \to \MM$ (i.e.\ an algebroid 1-cochain).

%Finally, we summarize the main properties of the embedding construction in the following compilation of results from \cite{BC,DrE} (see also \cite{CDr} for $E$-valued forms on the nerve of $G$).
Finally, we summarize the main properties of the embedding construction in the following compilation of results from \cite{DrE}.
\begin{proposition}\cite{DrE}\label{prop:emb}
Let $\omega$ be an $E$-valued $k$-form on $G$ and $(l,D)$ be a pair satisfying the Leibniz property defining a linear $E$-valued $k$-form on $A$.
\begin{enumerate}
\item\label{p1} $\omega$ is multiplicative if and only if $f_\omega$ is a cocycle in $\GG$.
\item\label{p2} $(l,D)$ defines a Spencer operator if and only if $\delta_{(l,D)}$ is a cocycle in $\AA$.
\item\label{p3} Let $\omega$ be multiplicative and $(l,D)$ the induced Spencer operator through \eqref{eq:Dfromomega}. Then, differentiation of the cocycle $f_\omega$ from $\GG$ to $\AA$ yields $\delta_{(l,D)}$.
\end{enumerate}
\end{proposition}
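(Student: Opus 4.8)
The plan is to treat the three items as an unpacking of the definitions \eqref{eq:fom} and \eqref{eq:defdelt} against the explicit (VB-)groupoid structure of $\GG$ and the (VB-)algebroid structure of $\AA$: items \ref{p1} and \ref{p2} are the ``global'' and ``infinitesimal'' versions of the same bookkeeping, while item \ref{p3} asserts that the Lie functor intertwines them. I note at the outset that, by Lemma \ref{lem:one_to_one}, the Leibniz identity \eqref{eq:Leibniz} is exactly what makes $\delta_{(l,D)}$ fiberwise-linear along $\AA\to\MM$, hence a genuine algebroid $1$-cochain (a section of $\AA^*$), as already recorded after \eqref{eq:defdelt}; this is assumed throughout item \ref{p2}.

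For item \ref{p1} I would write out the cocycle equation $f_\omega(\tilde g\,\tilde h)=f_\omega(\tilde g)+f_\omega(\tilde h)$ using the explicit product on $\GG$. A composable pair over composable $(g,h)$ in $G$ consists of $(w_1,\dots,w_k,\,h\cdot\xi)\in (T_gG)^k\oplus E^*_{\sigma(g)}$ and $(u_1,\dots,u_k,\,\xi)\in(T_hG)^k\oplus E^*_{\sigma(h)}$ with $d\sigma(w_i)=d\tau(u_i)$; here the $E^*$-component of the first arrow is forced to be $h\cdot\xi$ by composability in the action factor $G\lef E^*$, since $\widetilde\sigma(g,\eta)=\eta$ and $\widetilde\tau(h,\xi)=h\cdot\xi$. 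Its product is $(d\mu(w_1,u_1),\dots,d\mu(w_k,u_k),\,\xi)$, because $\widetilde\mu((g,h\cdot\xi),(h,\xi))=(gh,\xi)$. Evaluating $f_\omega$ and using the dual-representation formula $(h\cdot\xi)(e)=\xi(h^{-1}\cdot e)$ turns the cocycle identity, tested on $((w_1,u_1),\dots,(w_k,u_k))$, into
\[\xi\big(\mu^*\omega_{(g,h)}\big)=\xi\big(h^{-1}\cdot\mathrm{pr}_1^*\omega_{(g,h)}\big)+\xi\big(\mathrm{pr}_2^*\omega_{(g,h)}\big).\]
As this holds for every $\xi\in E^*$, it is equivalent to \eqref{eq:multwE}. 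The only point requiring care is recognizing that the action summand $G\lef E^*$ is precisely what reproduces the twist $h^{-1}\cdot(\cdot)$.

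Item \ref{p2} is the linearization of this computation. A $1$-cochain $\delta$ on $\AA$ is a cocycle iff $\delta([\alpha,\beta]_\AA)=\rho_\AA(\alpha)\,\delta(\beta)-\rho_\AA(\beta)\,\delta(\alpha)$ for all $\alpha,\beta\in\Gamma(\AA)$, and it suffices to test this on a spanning set of sections. I would use the standard sections of $TA\to TM$ — the tangent lift $a^T$ and the core lift $a^C$ of each $a\in\Gamma(A)$, placed in the individual summands — together with the canonical inclusion $\Gamma(A)\hookrightarrow\Gamma(A\lef E^*)$ in the action summand. Feeding these into the cocycle identity and invoking the known bracket relations on $TA$ (namely $[a^T,b^T]=[a,b]^T$, $[a^T,b^C]=[a,b]^C$, $[a^C,b^C]=0$) together with the action-algebroid anchor \eqref{eq:rho_tilde}, the tangent--tangent, tangent--core, and core--core pairings reproduce exactly the three compatibility conditions of \eqref{eq:compatibility}. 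The operator $\Lie_a$ of \eqref{eq:Lie}, including its dependence on $\nabla$, appears precisely because $\rho_\AA$ acts on the $E^*$-linear part of $\delta_{(l,D)}$ through \eqref{eq:rho_tilde}; tracking this connection term correctly through the action summand is the main obstacle in this item.

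Finally, for item \ref{p3}, both $\delta_{f_\omega}$ (the differentiation of the cochain $f_\omega$) and $\delta_{(l,D)}$ are fiberwise-linear functions on $\AA$, so it is enough to compare them pointwise. Differentiating $f_\omega$ amounts to evaluating $\frac{d}{dt}\big|_{t=0}f_\omega$ along paths representing elements of $\AA\cong\mathrm{Lie}(\GG)$ under the isomorphism $\psi\oplus\cdots\oplus\psi\oplus\mathrm{id}_{E^*}$. Using Lemma \ref{lema:MC_TG} to evaluate the Maurer--Cartan form on the tangent factors, the derivative reorganizes into the differentiation formula \eqref{eq:Dfromomega} for the pair $(l,D)$; Lemma \ref{lemma:linearization_is_linearization} then identifies the resulting linear form as $\Lambda_{(l,D)}$, so that $\delta_{f_\omega}$ evaluates to $\xi(\Lambda_{(l,D)}(\cdots))=\delta_{(l,D)}$ by \eqref{eq:defdelt}. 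The delicate step here is correctly passing differentiation through the canonical involution $\psi$ of \eqref{eq:can_involution}, which is exactly what Lemmas \ref{lema:MC_TG} and \ref{lemma:linearization_is_linearization} are designed to control.
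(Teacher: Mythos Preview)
The paper does not give a self-contained proof of this proposition: it is stated with a citation to \cite{DrE}, and the paragraph following it merely points to \cite[Prop.\ 5.9]{DrE}, \cite[proof of Thm.\ 3.9]{DrE}, and \cite{BC,CDr} for the individual items. So there is no ``paper's own argument'' to compare against beyond those references, and your sketch is in fact the same strategy those references implement.

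Your item \ref{p1} is essentially a complete and correct proof: you have correctly identified that composability in the $G\lef E^*$-factor forces the $E^*$-component of the first arrow to be $h\cdot\xi$, and that the dual-representation identity $(h\cdot\xi)(e)=\xi(h^{-1}\cdot e)$ is exactly what produces the twist in \eqref{eq:multwE}. For item \ref{p2} your strategy---test the algebroid $1$-cocycle identity on tangent and core lifts and read off the three Spencer conditions from the tangent--tangent, tangent--core, core--core pairings---is precisely what \cite[\S 5]{BC} does for trivial coefficients and \cite{CDr,DrE} do in general. The one point you have glossed over is how these lifts are promoted to honest sections of $\AA$ over all of $\MM$: in the references one places $a^T$ in \emph{every} $TA$-slot simultaneously (not in a single slot) together with the pull-back of $a$ in the $A\lef E^*$-slot, while core lifts go into one slot at a time; the $\nabla$-dependence of $\Lie_a$ then enters exactly, as you say, through \eqref{eq:rho_tilde}. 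With that bookkeeping made explicit your argument goes through. Item \ref{p3} is likewise correct in outline, and invoking Lemma \ref{lema:MC_TG} to pass differentiation through the involution $\psi$ is the right mechanism; this matches the differentiation argument in \cite{DrE}.
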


The results of \cite{DrE} hold in a generalized context in which $G\lef E^*$ and $A\lef E^*$ are replaced by the duals of a general VB-groupoid $\mathcal{V}$ and of its algebroid $\mathfrak{v}$, respectively. The specialization to our case comes from considering\footnote{In the terminology of VB-groupoids, the side-bundle of $\mathcal{V}=\t^*E$ has zero fibers and $E$ sits as the core-bundle. This implies that $\mathcal{V}^*$ has $E^*$ as side-bundle and trivial core.} $\mathcal{V}=\t^*E\rightrightarrows M$ with the groupoid structure:
\[ \bar{\s} (g,e) = \s(g), \ \bar{\t}(g,e) = \t (g) , \ \bar{\mu}((g_1,e_1),(g_2,e_2)) = (\mu(g_1,g_2), e_1 + g_1\cdot e_2).\]
See \cite[Proposition 5.9]{DrE} for item \ref{p2} above, and \cite[proof of Thm. 3.9 -- Differentiation]{DrE} for item \ref{p3} above; see also \cite[Example 3.6]{DrE} for the specialization to our case and the link to Spencer operators.
The analogue of the above proposition for forms with trivial coefficients can be found in \cite{BC}. The application of the structures on $\GG$ and $\AA$ to the description of higher order cocycles and the van Est map can be found in \cite{CDr}.

%\begin{proof}
%Item \ref{p1} is a straightforward algebraic consequence of the definitions. Item \ref{p2} requires differential calculus. The case of a trivial representation can be found in \cite[Theorem 1]{BC}. For non-trivial coefficients, the computation can be found in \cite[Proposition 5.9]{DrE} in a generalized context in which\footnote{Here, in the terminology of VB-algebroids, $E$ sits on the core of $\mathfrak{v}=A\lef E$ so that $E^*$ sits on the side-bundle of $\mathfrak{v}^*$.} $A\lef E$ is replaced by a general VB-algebroid $\mathfrak{v}$. (See \cite[Example 3.6]{DrE} for the specialization to our case and the link to Spencer operators). 
%Item \ref{p3} is Lie theoretic, from global to infinitesimal. Again, the case of a trivial representation can be found in \cite[Lemma 7]{BC}. For $E$-valued forms, 
%
%the statement follows from specializing \cite[proof of Thm. 3.9 -- Differentiation]{DrE} to the case $\mathfrak{v}=A\lef E$ as above.
%\end{proof}

%%%%%%%%%%%%%%%%%%%%%%%%%%%%%%%%%%%%%%%%%%%%%%%
\subsection{The proof of Theorem \ref{thm:tubular_Spencer}}
The strategy will be to reduce the proof to the case of cocycles (c.f.\ section \ref{subsection:cocyles}) by means of the embedding trick described in the previous section.

To this end, we need to endow $\GG$ with a tubular structure. Given a tubular structure $\varphi:A\dto G$, consider the map $\hat \varphi: \AA \dto \GG$ given by
\[ \hat \varphi(v_1,\dots, v_k,\xi) = (d\varphi(v_1),\dots, d\varphi(v_k),\xi) \in T_{\varphi(a)}A \oplus \dots T_{\varphi(a)}A \oplus (\sigma^*E^*)_{\varphi(a)}\]
where $a \in A_x$, $v_i \in T_aA$ and $\xi \in E^*_x$.

\begin{lemma}\label{lma:GGtub}
The map $\hat \varphi$ defined above defines a tubular structure for $\GG$. The associated family $\hat \lambda:\R \times \GG \dto \AA$ defined by \eqref{eq:lambda_t} is given by
\[ \hat \lambda_t(v_1,\dots,v_k,\xi) = (d\lambda_t(v_1),\dots, d\lambda_t(v_k), (tg)\cdot \xi), \ \ v_i \in T_a A, \ \xi \in E^*_{q(a)}\]
where $\lambda:\R \times A \dto G$ is defined from $\varphi$ on $G$ via \eqref{eq:lambda_t}.
\end{lemma}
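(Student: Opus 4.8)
The plan is to verify Lemma \ref{lma:GGtub} in two stages: first that $\hat\varphi$ is a tubular structure for $\GG$, and second that the induced scalar multiplication and the family $\hat\lambda_t$ have the stated form. Both parts should follow from the fact that the tangent functor and the action construction are, in a suitable sense, natural operations, so that the structure maps of $\GG$ are ``diagonal'' over those of $G$. The main bookkeeping issue is keeping track of which copy of a tangent space one is in, i.e.\ the identification $T_{\varphi(a)}A\cong T_xM\oplus A_x$ versus $T_{x}G\cong T_xM\oplus A_x$, and making sure $d\varphi$ intertwines them correctly.

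For the first part, recall that a tubular structure must satisfy $\hat\varphi(0)=\mathrm{id}_{\MM}$ on the unit section and $\frac{d}{dt}\hat\varphi(t\hat a)\big|_{t=0}=(0,\hat a)$ for $\hat a$ in the fiber of $\AA\to\MM$. The first condition is immediate since $\varphi(0_x)=x$ implies $d\varphi$ sends the zero tangent vector over $0_x$ to the zero tangent vector over $x$, and the $E^*$-component is carried along by the identity. For the second condition, I would compute $\frac{d}{dt}\hat\varphi(t\hat a)\big|_{t=0}$ component by component: in each $TG$-slot one gets $\frac{d}{dt}d\varphi(tv_i)\big|_{t=0}$ and, using that $d\varphi$ is fiberwise linear along the tubular structure together with the defining property $\frac{d}{dt}\varphi(ta)\big|_{t=0}=(0,a)$ of $\varphi$, this differentiates to the required vertical vector; in the $E^*$-slot the scalar $t$ acts trivially on $\xi$ in the sense dictated by the VB-groupoid structure, so one recovers $\xi$. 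One must double-check that $\hat\varphi$ is an open embedding of vector bundles over $\GG\to\MM$ — this is clear because it is a direct sum of open embeddings ($d\varphi$ is an open embedding since $\varphi$ is) tensored with the identity on $E^*$.

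For the second part, the induced scalar multiplication on $\GG$ is by definition $t\cdot(v_1,\dots,v_k,\xi)=\hat\varphi(t\,\hat\varphi^{-1}(\cdot))$, and since $\hat\varphi$ and $\hat\varphi^{-1}$ act diagonally, this is $(t\cdot v_1,\dots,t\cdot v_k,\xi)$ where $t\cdot v_i=d\varphi(t\,d\varphi^{-1}(v_i))$ is exactly the scalar multiplication on $TG$ induced by $d\varphi$ — and the latter is $d(t\cdot)$, the differential of the scalar multiplication on $G$ induced by $\varphi$, because the tangent functor commutes with $\varphi$. Hence $\frac{d}{dt}(t\cdot v_i)=d\lambda_t(v_i)$ after applying the Maurer-Cartan form, where one uses Lemma \ref{lema:MC_TG} to identify $\theta_{TG}\big(\frac{d}{dt}\frac{d}{d\e}g_\e(t)\big)=\frac{d}{d\e}\theta_G\big(\frac{d}{dt}g_\e(t)\big)=\frac{d}{d\e}\lambda_t(g_\e)=d\lambda_t(v_i)$; this is the place where the lemma on $\theta_{TG}$ is essential. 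For the $E^*$-slot, $\frac{d}{dt}(tg,\xi)$ sits in the action algebroid $A\lef E^*$, and chasing the structure maps $\widetilde\sigma,\widetilde\tau$ of the action groupoid shows $\widetilde\theta\big(\frac{d}{dt}(tg,\xi)\big)=(\lambda_t(g), (tg)\cdot\xi)$, so the $E^*$-component of $\hat\lambda_t$ is $(tg)\cdot\xi$ as claimed. The main obstacle is purely organizational: making the diagonal/naturality statements precise enough that ``the tangent functor commutes with everything'' is a genuine argument rather than a slogan, in particular handling the canonical involution $\psi$ of \eqref{eq:can_involution} correctly when identifying the algebroid of $\GG$ with $\AA$ — but once one observes that $\psi$ is exactly the identification under which $d\varphi$ of a vertical vector becomes vertical, everything lines up.
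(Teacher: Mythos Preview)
Your proposal is correct and follows essentially the same route as the paper: both arguments reduce the verification to the individual summands $TG$ and $G\lef E^*$, invoke Lemma~\ref{lema:MC_TG} to identify $\lambda_t^{d\varphi}=d\lambda_t$ on the tangent factors, and compute the Maurer--Cartan form of the action groupoid as $\theta_{G\lef E^*}\big(\frac{d}{dt}(g_t,\xi)\big)=\big(\theta_G(\frac{d}{dt}g_t),\,g_t\cdot\xi\big)$ for the $E^*$ factor. The only difference is presentational---the paper treats the two types of summand in separate paragraphs while you interleave them---and your remark about the canonical involution $\psi$ is exactly the point the paper addresses when it notes that ``the use of $\tau$ is necessary, because $TA$ is only isomorphic to the Lie algebroid of $TG$ as a vector bundle.''
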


\begin{proof}
Since $\GG$ is given as a $k$-fold sum of $TG$'s and of $G\lef E^*$, the proof reduces to the analgous statements for each summand. First, $(a,\xi) \mapsto (\varphi(a),\xi)$ clearly defines a tubular structure on $G\lef E^*$ whose associated map $\R \times (G\lef E^*) \dto A\lef E^*$ is fully characterized by the property
\[\theta_{G\lef E^*}\Big(\frac{d}{dt}(g_t,\xi)\Big)=\Big(\theta_G\Big(\frac{d}{dt}g_t\Big),g_t\cdot \xi\Big).\]
For the $TG$ summand, we need to show that $d\varphi:TA\dto TG$ defines a tubular structure and that the associated map $\lambda_t^{d\varphi}$ via equation \eqref{eq:lambda_t} coincides with $d\lambda_t:TG\dto TA$. 
As $\varphi$ is an open embedding then $d\varphi$ is an open embedding as well; since $\sigma\circ \varphi=q$, we have that $d\varphi\circ d\sigma=dq$; and since $\varphi|_M=\mathrm{id}_M$, we have that $d\varphi|_{TM}=\mathrm{id}_{TM}$. Consider a path $a_{\epsilon}\in A$, and let $v:=\frac{d}{d\epsilon}a_{\epsilon}|_{\epsilon=0}\in TA$. Since $\sigma(\varphi(ta_{\epsilon}))=q(ta_{\epsilon})=q(a_{\epsilon})$,
is independent of $t$, by using Lemma \ref{lema:MC_TG}, we obtain
\[\theta_{TG}\Big(\frac{d}{dt}d\varphi\circ d m_t(v)\Big)=\theta_{TG}\Big(\frac{d}{dt}\frac{d}{d\epsilon}\big|_{\epsilon=0}\varphi(ta_{\epsilon})\Big)=\frac{d}{d\epsilon}\big|_{\epsilon=0}\theta_{G}\Big(\frac{d}{dt}\varphi(ta_{\epsilon})\Big).\]
For $t=0$, since $\varphi$ is a tubular structure, we obtain
\[\tau\Big(\frac{d}{dt}\Big|_{t=0}d\varphi\circ d m_t(v)\Big)=v,\]
which concludes the proof that $d\varphi$ is a tubular structure (the use of $\tau$ is necessary, because $TA$ is only isomorphic to the Lie algebroid of $TG$ as a vector bundle). For arbitrary $t$, we obtain $\theta_{TG}(\frac{d}{dt}d\varphi\circ d m_t(v))=d\lambda_t\circ d\varphi (v)$, hence taking $v = d\varphi^{-1}(w)$ we obtain $\lambda^{d\varphi}_t = d\lambda_t$ as desired.
\end{proof}

We are now ready to prove our main theorem.

\bigskip

\begin{proof}[of Theorem \ref{thm:tubular_Spencer}]
Let $(l,D)$ be a Spencer operator of degree $k$ on $A$ with values in $E$ and $\Lambda=\Lambda_{(l,D)} \in \Omega^k_{\mathrm{lin}}(A,E)$ the associated linear $k$-form (c.f. Lemma \ref{lem:one_to_one}). Denote by $\delta=\delta_{(l,D)}: \AA \to \R$ the induced function via \eqref{eq:defdelt}. By Proposition \ref{prop:emb} (item \ref{p2}), $\delta$ is a cocycle on $\AA$. As explained in section \ref{subsection:cocyles}, given the tubular structure $\hat \varphi: \AA \dto \GG$ described above, $\delta$ integrates to a local cocycle $\tilde f : \GG \dto \R$ given by the formula
\[ \tilde f (v_1,\dots,v_k,\xi) = \int_0^1 \delta(\hat \lambda_t(v_1,\dots,v_k,\xi)) \ dt, \ v_i \in T_g G, \ \xi \in E^*_{\sigma(g)}.\]
On the other hand, let $\omega$ be the $E$-valued $k$-form on $G$ defined by the equation \eqref{eq:mult-form-coeff} in the statement of the Theorem and denote by $f_\omega : \GG \dto \R$ the associated function via \eqref{eq:fom}. We claim that $\tilde f = f_\omega$. Indeed,
\begin{eqnarray*}
\tilde f (v_1,\dots,v_k,\xi) &= &\int_0^1 \delta(d\lambda_t(v_1),\dots,d\lambda_t(v_k),(tg)\cdot \xi)\ dt \\
&=&\int_0^1 \left( (tg)\cdot \xi \right) (\Lambda( d\lambda_t(v_1),\dots,d\lambda_t(v_k) ) ) \ dt \\
&=&\int_0^1 \xi ( (tg)^{-1}\cdot \Lambda( d\lambda_t(v_1),\dots,d\lambda_t(v_k) ) ) \ dt \\
&=& f_\omega (v_1,\dots,v_k,\xi).
\end{eqnarray*}
In the first line above we used the characterization of $\hat \lambda_t$ given in Lemma \ref{lma:GGtub}; in the second line we used the definition \eqref{eq:defdelt} of $\delta$ in terms $\Lambda$; in the third line we used the definition of the dual representation of $G$ on $E^*$; in the last line we used the definition \eqref{eq:fom} of $f_\omega$ in terms of $\omega$. Thus, since $\tilde f$ is a cocycle on $\GG$, so is $f_\omega$. By Proposition \ref{prop:emb} (item \ref{p1}), we conclude that $\omega$ is multiplicative. By Proposition \ref{prop:emb} (item \ref{p3}) and Lemma \ref{lem:one_to_one}, we conclude that the Spencer operator associated to $\omega$ via \eqref{eq:Dfromomega} coincides with the given $(l,D)$, i.e.\ that $\omega$ integrates $(l,D)$. Finally, to address the uniqueness, suppose that $\tilde \omega$ is another integration of $(l,D)$. By Proposition \ref{prop:emb} (item \ref{p3}), both $f_\omega$ and $f_{\tilde \omega}$ are cocycles on $\GG$ that integrate the same $\delta:\AA \to \R$. As discussed in Section \ref{subsection:cocyles}, the germ of these two cocycles around $\MM \subset \GG$ must coincide, and this directly implies that $\omega = \tilde \omega$ on points near $M\subset G$. The theorem is thus proven.
\end{proof}

%%%%%%%%%%%%%%%%%%%%%%%%%%%%%%%%%%%%%%%%%%%%%

 \subsection{Integration to the spray groupoid}\label{subsection:int_spray}

We discuss the integration procedure for spray groupoids. Let $A$ be a Lie algebroid and $V\in \mathfrak{X}(A)$ be a Lie algebroid spray for $A$. Consider the associated local spray groupoid $G_V\rr M$ \cite{love}. Recall that $G_V$ is an open subset of $A$ containing the zero-section in $A$, which is the unit map, the source map is the bundle projection $\sigma=q$, the target is $\tau=q\circ \phi^1_V$, the inversion is $\iota=-\phi^1_V$. To define the multiplication, we first constructed the Maurer-Cartan form of $G_V$
\[\theta:T^{\sigma}G_V\dto A,\]
which had already been introduced in \cite{Ori}. In particular, $\theta$ it is characterized by the property that it is a Lie algebroid map, by the relation (see \cite{Ori,love})
\begin{equation}\label{eq:theta_Euler2}
\theta\Big(\frac{d}{dt}ta\Big)=\phi_V^{t}(a).
\end{equation}
Finally, $\mu:{G_V}_\sigma{\times}_{\tau}G_V\dto G_V$ is defined as $\mu(a,b)=k_1$, where $k_t\in A_{q(b)}$, $t\in [0,1]$, is the solution of the ODE:
\[\frac{dk_t}{dt}=\theta_{k_t}^{-1}(\phi_V^t(a)),\ \ k_0=b.\]
Note that the identity map $\mathrm{id}:A\dto G_V$ induces a tubular structure on $G_V$. By equation (\ref{eq:theta_Euler2}), the corresponding map $\lambda_t$ (\ref{eq:lambda_t}) becomes the flow of $V$
\begin{equation}\label{eq:lambda_simple}
\lambda_t(a)=\phi^t_V(a).
\end{equation}

Consider a representation $(E,\nabla)$ of $A$. The spray condition implies that for any $a\in A$, we have that $t\mapsto \phi_V^t(a)$ is an $A$-path \cite{CF1}, i.e.\ it satisfies 
\[\frac{d}{dt}q\circ \phi_V^t(a)=dq (V_{\phi_V^t(a)})=\rho({\phi_V^t(a)}).\]
Therefore one can define the parallel transport \cite{CF1,Fe1} of the $A$-connection $\nabla$ along $\phi^{\bullet}_V(a)$:
\[T^{t,0}_{\phi_V^{\bullet}(a)}:E_{q(a)}\diffto E_{q(\phi_V^t(a))},\]
for all $t$ in the domain of the flow $\phi^{\bullet}_V(a)$. We review the details of this construction in the proof of the following result, which describes integration of representations to the spray groupoid:

\begin{lemma}
The representation $(E,\nabla)$ of $A$ integrates to the following local representation of $G_V$
\[F:G_V\dto GL(E),\ \ a\mapsto T^{1,0}_{\phi_V^{\bullet}(a)}:E_{\sigma(a)}\diffto E_{\tau(a)}.\]
\end{lemma}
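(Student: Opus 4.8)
The plan is to verify directly that the map $F:G_V \dto GL(E)$, $a \mapsto T^{1,0}_{\phi_V^{\bullet}(a)}$ differentiates to the given $A$-connection $\nabla$, and then invoke the uniqueness of integration of Lie algebroid morphisms from \cite{love} to conclude that it is indeed the integrating representation. A representation of $G_V$ is a local groupoid morphism $G_V \dto GL(E)$ covering $\mathrm{id}_M$, equivalently (by the embedding into the action groupoid, or directly) it amounts to integrating the $A$-action on $E$; so it suffices to check two things: (i) $F$ is a well-defined smooth local groupoid morphism, and (ii) its differentiation recovers $\nabla$. For well-definedness, I would first recall that the spray condition makes $t \mapsto \phi_V^t(a)$ an $A$-path (as already noted in the excerpt), so parallel transport $T^{t,0}_{\phi_V^\bullet(a)}$ along it is defined by the usual linear ODE along an $A$-path \cite{CF1,Fe1}: picking a time-dependent section $a_t \in \Gamma(A)$ with $a_t(q(\phi_V^t(a))) = \phi_V^t(a)$, the transport of $e_0 \in E_{q(a)}$ is the solution $e_t$ of $\nabla_{a_t} e_t = 0$ along the base path, and this is independent of the chosen extension $a_t$; smoothness in $a$ follows from smooth dependence of solutions of ODEs on parameters together with smoothness of the spray flow.

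The heart of the argument is the multiplicativity $F(\mu(a,b)) = F(a) \circ F(b)$ for composable $(a,b)$ close to $M$, i.e. $T^{1,0}_{\phi_V^\bullet(\mu(a,b))} = T^{1,0}_{\phi_V^\bullet(a)} \circ T^{1,0}_{\phi_V^\bullet(b)}$ as maps $E_{q(b)} \to E_{q(\mu(a,b))} = E_{\tau(a)}$. The clean way to see this is to reparametrize: consider the curve in $G_V$ obtained by first traversing $s \mapsto \phi_V^s(b)$ for $s\in[0,1]$ and then $s\mapsto \mu(\phi_V^s(a),b)$ for $s\in[0,1]$ — using that $\mu(\phi_V^s(a), b) = k_s$ solves the defining ODE of the spray multiplication with $k_0 = b$, $k_1 = \mu(a,b)$. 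One checks, using the defining ODE $\dot k_s = \theta_{k_s}^{-1}(\phi_V^s(a))$ together with the characterization $\theta(\tfrac{d}{dt}ta) = \phi_V^t(a)$ and the fact that $\theta$ is a Lie algebroid morphism, that this concatenated base path in $M$ is (a reparametrization of) the $A$-path underlying $\phi_V^\bullet(\mu(a,b))$, and that the corresponding transports compose. Concretely: parallel transport along an $A$-path depends only on the $A$-path up to $A$-homotopy / reparametrization, and concatenation of $A$-paths corresponds to composition of transports; so the identity reduces to the statement that the $A$-path of $\mu(a,b)$ is the concatenation of the $A$-path of $b$ with the "$a$-translated" $A$-path of $b$ — which is exactly the content of how $\mu$ was built from $\theta$ in \cite{love}. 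I would phrase this via the $A$-path concatenation lemma to avoid a bare-hands ODE manipulation. The compatibility with $\widetilde u$, $\iota$ is then automatic (transport along the constant path is the identity; reversing an $A$-path inverts the transport).

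Finally, for the differentiation: by \eqref{eq:Dfromomega} applied in degree relevant to representations (or directly by the definition of the $A$-connection of a groupoid representation), one computes $\nabla^F_a e = \tfrac{d}{dt}\big|_{t=0} F(\exp(ta))^{-1}\cdot e$ along the relevant flow, and since $\lambda_t(a) = \phi_V^t(a)$ by \eqref{eq:lambda_simple} and $F(a)$ is transport time $0$ to $1$ along $\phi_V^\bullet(a)$, the infinitesimal generator of $t \mapsto T^{t,0}_{\phi_V^\bullet(a)}$ at $t=0$ is by construction $-\nabla_{a}$ evaluated appropriately; this matches $\nabla$ on the nose. With $F$ a local groupoid morphism covering $\mathrm{id}_M$ and differentiating to $\nabla$, the uniqueness part of \cite[Theorem 2.4]{love} forces $F$ to be \emph{the} integration, completing the proof.

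The main obstacle is step (ii)-into-(i), namely pinning down rigorously that the spray multiplication $\mu$ translates into concatenation of $A$-paths under parallel transport — i.e. matching the ODE definition of $\mu$ with the groupoid structure on $A$-paths / the $A$-homotopy picture, so that multiplicativity of $F$ is not an ad hoc computation but a structural consequence. Once that dictionary is in place (it is essentially the content of the Maurer–Cartan description of $G_V$ recalled around \eqref{eq:theta_Euler2}), the rest is routine ODE dependence and an appeal to the uniqueness of integration of morphisms.
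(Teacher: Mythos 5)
Your proposal is correct in outline but proceeds in the opposite direction from the paper, and it leaves the hardest step as a sketch. The paper's proof regards the representation as a Lie algebroid morphism $f:A\to\mathfrak{gl}(E)$ covering the identity and applies \cite[Corollary 3.17]{love}: that result both produces a local groupoid morphism $F:G_V\dto GL(E)$ integrating $f$ and characterizes it by the ODE $\theta_{GL(E)}\big(\frac{d}{dt}F(ta)\big)=f(\phi_V^t(a))$ with $F(0_{q(a)})=\mathrm{id}$; since this is literally the ODE defining parallel transport of the flat $A$-connection along the $A$-path $t\mapsto\phi_V^t(a)$, one reads off $F(a)=T^{1,0}_{\phi_V^{\bullet}(a)}$, so multiplicativity and the fact that $F$ differentiates to $\nabla$ cost nothing. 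You instead start from the transport formula and verify multiplicativity by hand, reducing it to the claim that the $A$-path of $\mu(a,b)$ is ($A$-homotopic to, after reparametrization) the concatenation of those of $b$ and $a$, together with homotopy invariance and functoriality of parallel transport for flat connections. This can be made rigorous (it is essentially the Weinstein-groupoid picture via $\exp_V$), but it imports more machinery and is exactly the step you flag as the main obstacle; note also the slip that the right-logarithmic derivative of $s\mapsto\mu(\phi_V^s(a),b)$ is $\phi_V^s(a)$, i.e.\ the $A$-path of $a$, not an ``$a$-translated $A$-path of $b$''. The cleaner move --- and the paper's --- is to use the existence/characterization clause of \cite{love} rather than only its uniqueness clause, which is all you invoke at the end; in fact once you have shown that $F$ is a local morphism differentiating to $\nabla$, that is already the statement of the lemma, so your final appeal to uniqueness is superfluous.
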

\begin{proof}
The Lie algebroid $\mathfrak{gl}(E)$ of $GL(E)$ can be described as follows: as a vector bundle, the fiber of $\mathfrak{gl}(E)$ over $x\in M$ consists of linear vector fields along $E_x$, namely:
\[\mathfrak{gl}(E)_{x}:=\{v\in \Gamma(TE|_{E_x})\ : \ dm_t(v_e)=v_{te},\ \  \ \forall \ t\neq 0,\forall e\in E_x\},\]
where $m_t:E\to E$ denotes multiplication by $t\in \R$. Sections of $\mathfrak{gl}(E)$ can be seen as linear vector fields on $E$, and the Lie algebroid bracket comes from the inclusion 
$\Gamma(\mathfrak{gl}(E))\subset \mathfrak{X}(E)$. The Maurer-Cartan form of $GL(E)$ is described as follows: given a smooth family of linear isomorphisms $g_t:E_x\diffto E_{x_t}$, then  
\[\theta_{GL(E)}\Big(\frac{d}{dt}g_t\Big)=\frac{d}{dt}g_sg_t^{-1}\big|_{s=t}\in \mathfrak{gl}(E)_{x_t}.\]
The representation of $A$ can be viewed as a Lie algebroid map $f:A\to \mathfrak{gl}(E)$ covering the identity; the corresponding flat $A$-connection on $E$ (see \cite[Lemma 2.33]{CF2}) is determined by 
\[\overrightarrow{\nabla_a(e)}=[f\circ a,\overrightarrow{e}],\ \ \ a\in \Gamma(A), \ e\in \Gamma(E),\]
where $\overrightarrow{e}\in \mathfrak{X}(E)$ denoted the constant vertical vector field corresponding to a section $e\in \Gamma(E)$. 

The parallel transport $T^{t,0}_{\phi_V^{\bullet}(a)}$ along $\phi_V^t(a)$ is the solution to the ODE:
\[k_0=\mathrm{id}_{E_{q(a)}}, \ \ \theta_{GL(E)}\Big(\frac{d}{dt}k_t\Big)=f(\phi_V^t(a)).\]
By \cite[Corollary 3.17]{love}, $f$ integrates to a local Lie groupoid map $F:G_V\dto GL(E)$, determined by the condition that $F(ta)$ is the solution to the same ODE; hence the conclusion.
\end{proof}

By the proof above, we obtain that the action of $G_V$ on $E$ is given by
\begin{equation}
(ta)\cdot e=T^{t,0}_{\phi_V^{\bullet}(a)}(e).
\end{equation}
Using this and equation (\ref{eq:lambda_simple}), Theorem \ref{thm:tubular_Spencer} takes the following form in the case of spray groupoids, which for trivial coefficients specializes to Theorem \ref{theorem:spray_no_coefficeints} from the Introduction:
\begin{theorem}\label{theorem:Spencer}
Let $G_V\rightrightarrows M$ be the spray groupoid of a spray $V$ on $A$. Given an $E$-valued Spencer operator $(l,D)$ of degree $k$ on $A$, with corresponding linear form $\Lambda=\Lambda_{(l,D)}$, the formula
\begin{equation}\label{eq:mult-form-coeff}
a\mapsto \omega_a=\int_{0}^1 T_{\phi_V^{\bullet}(a)}^{0,t}\cdot (\phi_V^t)^*\Lambda_{\phi_V^t(a)}\ dt \in  \wedge^kT^*_aA\otimes E_{q(a)}
 \end{equation}
defines a local $E$-valued multiplicative $k$-form $\omega$ on $G_V$ which integrates $(l,D)$.
Moreover, the germ around $M$ of such an integration is unique.
\end{theorem}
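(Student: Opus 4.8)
The plan is to obtain Theorem \ref{theorem:Spencer} as a direct specialization of Theorem \ref{thm:tubular_Spencer} to the spray groupoid $G_V$, equipped with the canonical tubular structure. Since $G_V$ is by construction an open subset of $A$ containing the zero section, the identity map $\mathrm{id}:A\dto G_V$ satisfies the defining properties of a tubular structure: it is an open embedding of bundles, it fixes $M$ pointwise, and the derivative $\frac{d}{dt}\mathrm{id}(ta)|_{t=0}=\frac{d}{dt}(ta)|_{t=0}$ equals the constant vertical vector $(0,a)\in T_xM\oplus A_x=T_xG_V$, as required in diagram \eqref{diagram:tubular}. Thus Theorem \ref{thm:tubular_Spencer} applies verbatim, and it only remains to rewrite its formula \eqref{eq:mult-form-coeff} using the simplifications special to sprays.

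The first simplification concerns the scalar multiplication and the map $\lambda_t$. With the tubular structure $\mathrm{id}:A\dto G_V$, the induced scalar multiplication along the source fibers is just the usual fiberwise scalar multiplication on $A$, so $tg=ta$ for $g=a\in G_V$. The Maurer-Cartan form $\theta$ of $G_V$ satisfies the characterizing relation \eqref{eq:theta_Euler2}, namely $\theta(\frac{d}{dt}ta)=\phi_V^t(a)$; plugging this into the definition \eqref{eq:lambda_t} of $\lambda$ gives immediately $\lambda_t(a)=\phi_V^t(a)$, which is recorded in \eqref{eq:lambda_simple}. The second simplification concerns the action of $G_V$ on $E$. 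By the preceding Lemma, the representation of $G_V$ integrating $(E,\nabla)$ is given by parallel transport along the $A$-path $\phi_V^\bullet(a)$, so $(ta)\cdot e=T^{t,0}_{\phi_V^\bullet(a)}(e)$ and hence $(tg)^{-1}\cdot = (ta)^{-1}\cdot = T^{0,t}_{\phi_V^\bullet(a)}$. Substituting these three identities into \eqref{eq:mult-form-coeff} of Theorem \ref{thm:tubular_Spencer} yields exactly the claimed formula
\[
\omega_a=\int_0^1 T^{0,t}_{\phi_V^\bullet(a)}\cdot(\phi_V^t)^*\Lambda_{\phi_V^t(a)}\ dt.
\]
The assertions that $\omega$ is a well-defined local $E$-valued multiplicative $k$-form integrating $(l,D)$, and that the germ of such an integration around $M$ is unique, are inherited directly from Theorem \ref{thm:tubular_Spencer}.

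Finally, the specialization to trivial coefficients $E=\R\times M$ recovers Theorem \ref{theorem:spray_no_coefficeints}: the parallel transport $T^{0,t}$ becomes the identity, the Spencer operator $(l,D)$ becomes an IM-form $(l,\nu)$ with $\Lambda_{(l,\nu)}=dl^*(\alpha_{\mathrm{can}}^{k-1})+\nu^*(\alpha_{\mathrm{can}}^k)=\Lambda_{(l,\nu)}$ as in \eqref{eq:lin_form_IM}, and pulling $\Lambda_{\phi_V^t(a)}$ back along $\phi_V^t$ and integrating gives $\omega=\int_0^1(\phi_V^t)^*\Lambda_{(l,\nu)}\ dt$. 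There is essentially no hard step here: the content is entirely in Theorem \ref{thm:tubular_Spencer} and the Lemma identifying the integrating representation, and the only thing to verify carefully is that the canonical identifications — the tubular structure $\mathrm{id}$, the formula \eqref{eq:theta_Euler2} for $\theta$, and the parallel-transport description of the action — are correctly matched against the general formula. The mild bookkeeping point to be careful about is the direction of parallel transport: $(tg)^{-1}\cdot$ in \eqref{eq:mult-form-coeff} translates to $T^{0,t}$ (transport back from $q(\phi_V^t(a))$ to $q(a)=\sigma(a)$), consistent with $\omega_a$ taking values in $\wedge^kT_a^*A\otimes E_{q(a)}$.
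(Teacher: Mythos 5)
Your proposal is correct and follows essentially the same route as the paper: the authors also obtain Theorem \ref{theorem:Spencer} by specializing Theorem \ref{thm:tubular_Spencer} to the tubular structure $\mathrm{id}:A\dto G_V$, using \eqref{eq:theta_Euler2} to identify $\lambda_t=\phi_V^t$ and the preceding lemma to identify the $G_V$-action on $E$ with parallel transport, so that $(ta)^{-1}\cdot$ becomes $T^{0,t}_{\phi_V^{\bullet}(a)}$. Your care about the direction of parallel transport matches the paper's conventions exactly.
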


\subsection{Some remarks}

\begin{remark}[Chain map]\label{rem:formula_chain}
For later use let us remark the following: in the case of trivial coefficients, the differentiation procedure (\ref{eq:differentiation}) is a chain map, where the differential on multiplicative forms is simply the de Rham differential, and the differential on IM-forms acts as \[d_{IM}(l,\nu)=(\nu,0).\]
\end{remark}

\begin{remark}[Multiplicative forms at units]\label{rem:omega_along}
Let $\omega$ be a multiplicative form which integrates the IM-form $(l,\nu)$. At points $x\in M\subset G$ on the unit section, under the natural decomposition $T_xG = T_xM \oplus A_x$, by using multiplicativity of $\omega$ one can show that (see \cite[Lemma 4.2]{Maria}):
\[  \omega(a_1,\ldots, a_j, v_1,\ldots,v_{k-j}) =\frac{1}{j}\sum_{i=1}^{j} (-1)^{i-1}\omega(a_i,\rho a_1,\ldots,\widehat{\rho a_{i}},\ldots,\rho a_j, v_1,\ldots,v_{k-j})\]
where $v_i \in T_xM $, $a_i \in A_x $. Therefore, by relation \eqref{eq:differentiation}, $\omega$ is determined by $l$ at the unit section:
 \begin{equation}\label{eq:omegaatzero}
  \omega(a_1,\ldots, a_j, v_1,\ldots,v_{k-j}) = \frac{1}{j}\sum_{i=1}^{j} (-1)^{i-1}l(a_i)(\rho a_1,\ldots,\widehat{\rho a_{i}},\ldots,\rho a_j, v_1,\ldots,v_{k-j}).
 \end{equation}
Applying this to $d_{IM}(l,\nu)=(\nu,0)$, and Remark \ref{rem:formula_chain}, we obtain a similar result for $d\omega$:
 \begin{equation*}
  d\omega(a_1,\ldots, a_j, v_1,\ldots,v_{k+1-j}) = \frac{1}{j}\sum_{i=1}^{j} (-1)^{i-1}\nu(a_i)(\rho a_1,\ldots,\widehat{\rho a_{i}},\ldots,\rho a_j, v_1,\ldots,v_{k+1-j}).
 \end{equation*}
\end{remark}

\begin{remark}[Exact multiplicative forms]\label{rem:mult_exact}
For the Lie algebroid $TM$ any IM-$k$-form is of the form $(\varpi^{\flat},(d\varpi)^{\flat})$, for a unique $k$-form $\varpi\in \Omega^k(M)$, where for $\eta\in\Omega^k(M)$, we denote $\eta^\flat:TM\to\wedge^{k-1}T^*M,\ v\mapsto \iota_v\eta$. For any Lie algebroid $A$ over $M$, one pulls back this IM-form via the anchor and obtain the IM-$k$-form $(\varpi^{\flat}\circ\rho, (d\varpi)^{\flat}\circ\rho)$ on $A$. The corresponding global construction is as follows. If $G\rr M$ is a local Lie groupoid integrating $A$, then, for $\varpi\in \Omega^k(M)$, $\omega:=\tau^*\varpi-\sigma^*\varpi$ is a local multiplicative form which integrates $(\varpi^{\flat}\circ\rho, (d\varpi)^{\flat}\circ\rho)$. Let us verify that this is what Theorem \ref{thm:tubular_Spencer} predicts. First, note that, for any $\eta\in \Omega^{p}(M)$ and any spray $V$ on $A$, we have that:
\[i_Vq^*(\eta)=(\eta^{\flat}\circ \rho)^*(\alpha^{p-1}_{\mathrm{can}}).\]
This implies that the linear form corresponding to $(\varpi^{\flat}\circ\rho, (d\varpi)^{\flat}\circ\rho)$ is
\[\Lambda=d i_Vq^*\varpi+i_Vq^*d\varpi=\mathcal{L}_Vq^*\varpi.\]
Therefore, we obtain the expected result:
\begin{align*}
\omega&=\int_0^1(\phi_V^t)^*\Lambda\ dt=\int_0^1(\phi_V^t)^*\Lie_Vq^*\varpi\ dt=
\int_0^1\frac{d}{dt}(\phi_V^t)^*q^*\varpi\ dt=\\
&=(q\circ \phi_V^1)^*\varpi-q^*\varpi=\tau^*\varpi-\sigma^*\varpi.
\end{align*}
\end{remark}

\begin{remark}[Mutiplicative forms on the space of $A$-paths]
Recall from \cite{CF1} that the \textbf{Weinstein groupoid} associated to a Lie algebroid $A$ is $G(A)=P(A)/\mathcal{F}\rr M$, where $P(A)$ denotes the space of $C^1$-$A$-paths, and $\mathcal{F}$ is the $A$-homotopy foliation. Given a spray $V$ on $A$, there is an associated exponential map 
\begin{equation}\label{eq:hatexp}
\exp_V: A \dto P(A),\ \  a \mapsto (t\mapsto \phi_V^t(a))_{t\in [0,1]},
\end{equation}
and the spray groupoid $G_V$ is such that $\exp_V$ induces a local groupoid map. The origin of the formula for integrating IM-forms becomes transparent using this example. Namely, an IM-$k$-form $(l,\nu)$ on $A$ induces naturally a differential $k$-form on $P(A)$:
\[ \hat{\omega} = \int_0^1 ev^*\Lambda_{(l,\nu)}\ dt,\]
where $ev: P(A) \times [0,1] \to A$ denotes $(a, t)\mapsto a_t$. It follows that the local form $\omega$ on $G_V$ from Theorem \ref{theorem:spray_no_coefficeints} coincides with $\exp_V^*\hat{\omega}$. The fact that $\omega$ is multiplicative and integrates $(l,\nu)$ can be seen as a consequence of the fact that $\hat{\omega}$ is basic relative to $A$-homotopy foliation $\mathcal{F}$ on $P(A)$ (see also \cite[Remark 4 after Theorem 2]{BC}). This argument is explained in detail in \cite{BX}. However, the proof of Theorem \ref{theorem:spray_no_coefficeints} that we shall provide here is independent of these observations, and does not involve any infinite dimensional differential geometry; it is based on the general properties of the spray groupoid construction.
\end{remark}

\begin{remark} [Characterizing global multiplicative forms]
 Let $H\tto M$ be a (global) Lie groupoid which is source connected and simply connected, and let $\omega$ a multiplicative form on $H$ with trivial coefficients (for simplicity). Following \cite[Appendix A]{BaGu}, the restriction $\omega|_G$ to a source connected local Lie subgroupoid $G\subset H$ over $M$ fully determines $\omega$. Moreover, any multiplicative form on $G$ extends uniquely to a multiplicative form on $H$. Thus, the formula of Theorem \ref{theorem:spray_no_coefficeints} uniquely characterizes any multiplicative form on any source $1$-connected Lie groupoid $H\tto M$.
\end{remark}

\begin{remark} [Integrating local multiplicative multivectors]
Following \cite[Section 6]{BC}, the description of multiplicative multivectors is quite parallel to our discussion of multiplicative forms. In this case, the role of the tangent local groupoid $TG\rightrightarrows TM$ is replaced by the {\bf cotangent local (VB-)groupoid} $T^*G\rightrightarrows A^*$ (with cotagent Lie algebroid $T^*A \to A^*$). A local multiplicative multivector field $\Pi$ on $G\rightrightarrows M$ corresponds to a local $1$-cocycle on $k$-copies of $T^*G$ and its infinitesimal counterpart $\pi$ to a $1$-cocycle on $k$-copies of $T^*A$ (for more details, see \cite{BC,ILX}). To provide a formula for $\Pi$ in terms of the infinitesimal $\pi$ analogous to that of Theorem \ref{thm:tubular_Spencer}, one can try to proceed as in the case of forms above and fix a tubular structure $\widehat{\varphi}:T^*A\dto T^*G$, with associated family $\widehat{\lambda}_t:T^*G\dto T^*A$. Unlike the case of $TG$, there is no natural way of inducing a tubular structure on $T^*G$ from a tubular structure $\varphi:A\dto G$ (without making any additional non-canonical choices). Nevertheless, this construction leads to the following formula (recall the integration of cocycles from Subsection \ref{subsection:cocyles})
\[\Pi(\alpha_1,\dots,\alpha_k)= \int_0^1  \pi(\widehat{\lambda}_t(\alpha_1), \ldots,\widehat{\lambda}_t(\alpha_k))\ dt , \ \alpha_i \in T^*_g G,\]
which can be shown to indeed define a local multipicative multivector $\Pi$ integrating $\pi$. We also remark that the arguments for forms and multivectors can also be combined to yield formulas for {\bf local multiplicative tensor fields} (see \cite{BDtensor}).
\end{remark}
%{\color{red} Question: If $\varphi:A\dto G$ is a tubular structure, is $\widehat{\varphi}:T^*A\dto T^*G$, 
%\[\widehat{\varphi}:T^*_aA\to T^*_{\varphi(a)}G, \ \ \widehat{\varphi}(\alpha):=\alpha\circ (d_a\varphi)^{-1}\]
%not a tubular structure? Is there a simple condition on $\varphi$ when this holds? If we leave the remark above, maybe we should comment on this as well..}

\section{Applications}\label{sec:appl}

Many geometric structures, for example Poisson, Dirac, Jacobi, generalized complex structures etc., can be encoded by a Lie algebroid endowed with a certain IM-form or, more generally, with a Spencer operator. Here we apply our general results to provide explicit local integrations of such structures to local multiplicative forms on the corresponding spray groupoids. As a byproduct, we reprove the global formulas for realizations of the following structures: Poisson \cite{CM}, Dirac \cite{IP}, and Nijenhuis-Poisson (including holomorphic Poisson) \cite{BX,Peta}. We work out the integration of Jacobi structures by local contact groupoids, which involve non-trivial representations.

\subsection{Local integration of Poisson structures}\label{subsec:Poisson}

In \cite{CDW}, it is proven that a symplectic realization of a Poisson manifold, together with a fixed Lagrangian section, determine a local symplectic groupoid on the total space of the realization that integrates the Poisson manifold. This was one of the first general constructions of local Lie groupoids. By applying the ``spray method'' to this case, we give an explicit construction of a local symplectic spray groupoid integrating a Poisson manifold.
\vspace*{0.3cm}

Let $(M,\pi)$ be a Poisson manifold. The associated \textbf{cotangent Lie algebroid} has total space $T^*_{\pi}M=T^*M \to M$, has anchor map given by $\pi$ turned into a bundle map $\pi^{\sharp}:T_{\pi}^*M\to TM$, and has Lie bracket given by $[a,b]_{\pi}=\mathcal{L}_{\pi^{\sharp}(a)}b-i_{\pi^{\sharp}(b)}da$, for $a,b\in \Omega^1(M)$.

The cotangent Lie algebroid $T^*_{\pi}M$ carries a canonical closed IM-2-form given by $(-\mathrm{id},0)$, with corresponding Spencer operator $(-\mathrm{id},-d)$, and with corresponding linear form the canonical symplectic structure $\omega_{0}$ on $T^*M$ (see e.g.\ \cite{BC}).

Given a spray $V$ for $T^*_\pi M$, also called a \textbf{Poisson spray}, we construct the associated spray groupoid $G_V\rightrightarrows M$. By Theorem \ref{theorem:spray_no_coefficeints}, $G_V$ carries a local multiplicative $2$-form given by
\begin{equation}\label{eq:like_in_CM}
\omega = \int_0^1 (\phi^t_V)^*\omega_{0}\ dt.
\end{equation}
We claim that, in a neighborhood of $M$,
\[(G_V,\omega)\rr (M,\pi)\]
is a local symplectic groupoid integrating the Poisson manifold $(M,\pi)$, in the sense of \cite{CDW}. First, $\omega$ is obviously closed, and even exact, with (non-multiplicative) primitive $-\int_0^1 (\phi^t_V)^*\alpha_0 \ dt$, where $\alpha_0$ denotes the tautological 1-form on $T^*M$. At points on the unit-section $x\in M\subset G_V$, using the canonical decomposition $T_x(T^*M)=T_xM\oplus T^*_xM$, equation \eqref{eq:omegaatzero} gives
	\begin{equation}\label{eq:omega}
\omega_{x}(v+a,w+b)=\langle b\ |\ v\rangle-\langle a\ | \ w\rangle+\pi(a,b), \end{equation}
for all $v+a,w+b\in T_{x}G_V$. In particular, this formula shows that $\omega$ is non-degenerate along $M$, and hence, it is so in a neighborhood of $M$. Thus, $(G_V,\omega)\rr M$ is indeed a local symplectic groupoid. Following \cite{CDW}, $\omega^{-1}$ is $q$-projectable to a Poisson structure on $M$, and it follows from the expression (\ref{eq:omega}) at points of $M$ that this Poisson structure is $\pi$. This proves the claim.

Being a local symplectic groupoid integrating $\pi$ implies that $q:(T^*M,\omega)\dto (M,\pi)$ defines a symplectic realization. This fact already appears in \cite{CM} and the proof given there is a direct but tedious calculation, whereas the one given here is Lie theoretic in nature, as it exploits the local Lie groupoid $G_V$ and the multiplicativity of $\omega$ (another short, Dirac-geometric proof of the realization property is found in \cite{IP}).

On the other hand, as noticed in \cite{CM}, the local Lie groupoid structure on $G_V$ is determined by the symplectic realization $q:(G_V,\omega)\to (M,\pi)$: first, in a neighborhood of $M$, the Maurer-Cartan form $\theta$ corresponding to the spray $V$ can be given in terms of $\omega$ by:
\begin{equation}\label{eq:MC_for_Poisson}
\tau^*(\theta_a(v))=-i_v\omega_a\in T^*_{a}G_V, \ \ \ \ v\in T^{\sigma}_aG_V.
\end{equation}
This follows by the multiplicativity of $\omega$: for any $w\in T_aG_V$, using (\ref{eq:omega}) and writing $\theta_a(v)=d\mu(v,0_{a^{-1}})$, and $d\tau(w)=d\mu(w,d\iota(w))$, we obtain:
\begin{align*}
\tau^*(\theta_a(v))(w)&=\langle \theta_a(v)\ | \ d\tau(w)\rangle=\omega_{\tau(a)}(d\tau(w),\theta_a(v))=\\
 &=\omega_a(w,v)+\omega_{a^{-1}}(d\iota(w),0_{a^{-1}})=-\omega_a(v, w),
\end{align*}
which proves (\ref{eq:MC_for_Poisson}). Using formula (\ref{eq:MC_for_Poisson}), we obtain a simpler description of the ODE from \cite{love} describing the multiplication of $G_V$: for $(a,b)\in {G_V}_{\sigma}\times_{\tau}G_V$, close enough to $M$, the curve $k_t=\mu(ta,b)$, with $t\in [0,1]$, is the solution of the ODE
	\begin{equation}
\label{eq:Poisson} \frac{d}{dt}k_t =  -\Pi^\sharp_{k_t}( \tau^*(\phi^t_V(a))), \ \ k_0=b,
\end{equation}
where $\Pi$ denotes the inverse of $\omega$, which is defined on a neighborhood of $M$ in $G_V$.

\subsection{Closed IM-2-forms on Poisson manifolds}\label{sec:closed2forms}

Consider the same setting as in the previous subsection: $(M,\pi)$ is a Poisson manifold, $V$ is a spray on the cotangent Lie algebroid $T^*_{\pi}M$, and $(G_V,\omega)\rr (M,\pi)$ is the corresponding local symplectic spray groupoid.

A closed IM-2-form on $T^*_{\pi}M$, is given by a vector bundle map $l:T^*M\to T^*M$, such that:
\[\pi(l(a),b)=\pi(a,l(b)),\  \ \ l([a,b]_{\pi})=\Lie_{\pi^{\sharp}(a)}l(b)-i_{\pi^{\sharp}(b)}dl(a),\]
for all $a,b\in \Omega^1(M)$.

The linear 2-form associated to $(-l,0)$, is given by:
\[\Lambda_{(-l,0)}=l^*(\omega_0)\in \Omega^2(T^*M),\]
where $\omega_0$ is the canonical symplectic structure on $T^*M$. Thus, the multiplicative closed $2$-form on the spray groupoid $G_V$ corresponding to $(-l,0)$ is given by
\[\omega_{(-l,0)}=\int_0^1(l\circ \phi_V^t)^*\omega_0\ dt.\]
Since the symplectic $\omega$ is non-degenerate around $M$, we can write:
\[\omega_{(-l,0)}(u,v)=\omega_L(u,v):=\omega(Lu,v), \ \ u,v\in TG_V,\]
for a unique vector bundle map $L:TG_V\to TG_V$. The fact that $\omega$ and $\omega_L$ are multiplicative implies that $L$ is multiplicative, i.e.\ it is a groupoid morphism (the Lie algebroid morphism induced by $L$ is the \textbf{complete lift} of $l$ to the cotangent bundle \cite[Definition 7.1]{BX}, see also \cite{BDtensor}):
\[
\xymatrixrowsep{0.4cm}
\xymatrixcolsep{1.2cm}
\xymatrix{
 TG_V \ar@{-->}[r]^{L}\ar[d] & TG_V\ar[d] \\
 TM\ar@{->}[r]^{l} & TM}\]
where the dual of $l$ was denote by the same symbol $l=l^{t}:TM\to TM$. To see that the base map is indeed $l$, we calculate $\omega_{L}=\omega_{(-l,0)}$ along the zero-section, using Remark \ref{rem:omega_along}, and obtain:
\[\omega_{L}(v+a,w+b)=\langle v\ |\ l(b)\rangle-
\langle w\ |\ l(a)\rangle-\pi(l(a),b)=\omega(l(v)+l(a),w+b),\]
for all $v+a,w+b\in T_xG_V$, $x\in M$. Hence, along the zero-section,
\begin{equation}\label{eq:L_along_zero}
L(v+a)=l(v)+l(a).
\end{equation}
Finally, denote by $\Pi:=\omega^{-1}$. For each $k\geq 0$, consider the bivector fields:
\[\Pi_{L^k}\in \X^2(G_V), \ \ \Pi^{\sharp}_{L^k}:=\Pi^{\sharp}\circ L^k,\ \ \
\pi_{l^k}\in \X^2(M), \ \ \pi^{\sharp}_{l^k}:=\pi^{\sharp}\circ l^k.\]
Since $\sigma_*(\Pi)=\pi$, and $L$ and $l$ are $\sigma$ related, it follows that also:
\[\sigma_*(\Pi_{L^k})=\pi_{l^{k}}\ \ \ \  \textrm{(and similarly,}\ \  \tau_*(\Pi_{L^k})=-\pi_{l^{k}}\textrm{).}\]
In general, these bivector fields are not Poisson.

\subsubsection{The case of invertible IM-forms}

We discuss now the case when $l:T^*M\to T^*M$ is invertible. By (\ref{eq:L_along_zero}), this is equivalent to $L$ being invertible around $M$, and is also equivalent to $\omega_L$ being symplectic. By the argument above, also for $k<0$ we have that $\sigma_*(\Pi_{L^{k}})=\pi_{l^{k}}$. Moreover, the following hold:
\begin{itemize}
\item $\pi_{l^{-1}}$ is Poisson and $(G_V,\omega_{L})\rr (M,\pi_{l^{-1}})$ is a symplectic groupoid.
\item The vector field $l_*V=dl\circ V\circ l^{-1}$ is a Poisson spray for $\pi_{l^{-1}}$. If $(G_{l_*V},\omega_{l_*V})\rr (M,\pi_{l^{-1}})$ denotes the local symplectic spray groupoid of $l_*V$, then \[l:(G_{V},\omega_{L})\dto (G_{l_*V},\omega_{l_*V})\]
    gives an isomorphism between germs of local symplectic groupoids.
\end{itemize}

Clearly $(G_V,\omega_{L})$ is a symplectic groupoid. Since $(\omega_{L})^{-1}=\Pi_{L^{-1}}$, and $\sigma_*(\Pi_{L^{-1}})=\pi_{l^{-1}}$, the first part follows. By a direct verification, $l_{*}V$ is indeed a Poisson spray for $\pi_{l^{-1}}$, and clearly, its flow is $\phi_{l_*V}^t=l\circ \phi_V^t\circ l^{-1}$. Since $l$ intertwines the two sprays, \cite[Example 3.18]{love} implies that $l$ is a local Lie groupoid map between the two spray groupoids $l:G_{V}\dto G_{l_*V}$. Finally, note that $l$ is a local symplectomorphism:
\[\omega_{L}=\int_0^1(l\circ \phi_V^t)^*\omega_0\ dt=
\int_0^1(\phi_{l_*V}^t\circ l)^*\omega_0\ dt=
l^*\big(\int_0^1(\phi_{l_*V}^t)^*\omega_0\ dt\big)=l^*(\omega_{l_*V}).\]
This completes the proof of the claims made above.

In particular we have obtained that the projection $\sigma=q:T^*M=G_V\to M$ gives two symplectic realizations:
\[q:(\mathcal{U},\omega)\to (M,\pi),\ \ q:(\mathcal{U},\omega_L)\to (M,\pi_{l^{-1}}),\]
\[\omega=\int_0^1(\phi_V^t)^*\omega_{0}\ dt,\ \ \omega_L=\int_0^1(\phi_V^t)^*l^*(\omega_{0})\ dt,\]
for some neighborhood $\mathcal{U}\subset T^*M$ of $M$. This gives a different proof of \cite[Theorem 3.3]{Peta}, but without the assumption that $l$ be a Nijenhuis tensor (!) (see also Subsection \ref{subsubsec:Nijenhuis}).

\subsubsection{Deformations of a multiplicative symplectic structure}

A closed IM-$2$-form $l$ induces a \textbf{deformation of the multiplicative symplectic structure} $\omega$ of $G_V$, namely, the one-parameter family of closed multiplicative 2-form on $G_V\rr M$:
\[\omega_{L_{\epsilon}}=\omega+\omega_{\epsilon L},\]
defined for small $\epsilon$, which corresponds to $l_{\epsilon}=\mathrm{Id}+\epsilon l$, and therefore $L_{\epsilon}=\mathrm{Id}+\epsilon L$. We will assume that $l_{\epsilon}$ is invertible (which always holds locally on $M$, for small $\epsilon$), which implies that $L_{\epsilon}$ is invertible, and also that $\omega_{L_{\epsilon}}$ is symplectic. As discussed above,
\[(G_V,\omega_{L_{\epsilon}})\rr (M,\pi_{l^{-1}_\epsilon})\]
is a symplectic groupoid. Note that Taylor expansion in $\epsilon$ of the Poisson structure is given by:
\[\pi_{l_{\epsilon}^{-1}}=\pi-\epsilon\pi_l+\epsilon^2\pi_{l^2}-\epsilon^3\pi_{l^3}+\ldots. \]
Then $\pi_l$, the speed at $\epsilon=0$, is a Poisson-cocycle: $[\pi,\pi_l]=0$. 

Let us discuss the case of \textbf{trivial deformations}, i.e.\ assume that $\pi_l=-[\pi,X]$, for some vector field $X$. This condition is equivalent to $(X,l)$ being an IM-1-form, i.e.\ to $l$ being exact; moreover, since the integration of $IM$-forms is a chain complex isomorphism to germs of multiplicative forms (Remark \ref{rem:formula_chain}), the condition is also equivalent to $\omega_L$ admitting a local multiplicative primitive $\eta$. By Theorem \ref{theorem:spray_no_coefficeints}, this multiplicative 1-form integrating $(X,l)$ is given by
\[\eta=\int_0^1(\phi_V^t)^*(l^*\alpha_0)\ dt+d\int_0^1(\phi_V^t)^*\widetilde{X}\ dt,\]
where $\alpha_0$ is the tautological 1-form on $T^*M$ and $\widetilde{X}\in \C(T^*M)$ is $X$ viewed as a linear function. Then $\omega_L=-d\eta$, and so $\omega_{L_{\epsilon}}=\omega-\epsilon d\eta$. Consider the local, time-dependent vector field $Y_{\epsilon}$, defined by
\[i_{Y_{\epsilon}}\omega_{L_{\epsilon}}=\eta.\]
Then $Y_{\epsilon}$ is multiplicative, hence its flow $\phi_{Y_{\bullet}}^{\epsilon}$ is a local groupoid map; and by the usual Moser argument, $(\phi_{Y_{\bullet}}^{\epsilon})^*\omega_{L_\epsilon}=\omega$. Thus we obtain a local symplectic groupoid isomorphism:
\[\phi_{Y_{\bullet}}^{\epsilon}:(G_V,\omega)\dto(G_V,\omega_{L_{\epsilon}}).\]
Since $Y_{\epsilon}$ is multiplicative, it is $\sigma$-projectable to a vector field $X_{\epsilon}$ on $M$ whose flow, when defined, is a Poisson isomorphism:
\[\phi_{X_{\bullet}}^{\epsilon}:(M,\pi)\to (M,\pi_{l^{-1}_\epsilon}).\]
In fact, by calculating $\omega_{L_{\epsilon}}$ and $\eta$ along the zero-section with the aid of Remark \ref{rem:omega_along},
\[\omega_{L_{\epsilon}}(v+a,w+b)=\langle v|l_{\epsilon}b\rangle-
\langle w|l_{\epsilon}a\rangle-\pi(l_{\epsilon}a,b),\ \ \eta(v+a)=\langle X_x|a\rangle,\]
for all $v+a,w+b\in T_xG_V$, $x\in M$, one obtains that $Y_{\epsilon}|_M=l_{\epsilon}^{-1}\circ X$. Hence, $X_{\epsilon}=l^{-1}_{\epsilon}(X)$.

Another interesting case are \textbf{gauge transformations}; i.e.\ when $l=\varpi^{\flat}\circ\pi^{\sharp}$, for a closed 2-form $\varpi$ on $M$. In this case, as discussed in Remark \ref{rem:mult_exact}, we have that $(-l,0)$ integrates to $\omega_{L}=\sigma^*(\varpi)-\tau^{*}(\varpi)$. We obtain the deformation of symplectic groupoids $(G_V,\omega_{L_{\epsilon}})\rr (M,\pi_{l^{-1}_{\epsilon}})$, \[ \textrm{where}\ \ \omega_{L_{\epsilon}}=\omega+\epsilon(\sigma^*(\varpi)-\tau^{*}(\varpi))\ \ \textrm{and}\ \  \pi_{l_\epsilon^{-1}}^{\sharp}=\pi^{\sharp}\circ (\mathrm{Id}+\epsilon\varpi^{\flat}\circ \pi^{\sharp})^{-1}\]
is the gauge transformation of $\pi$ by the closed 2-form $\epsilon \varpi$.

\subsubsection{Local integration of Poisson-Nijenhuis structures}\label{subsubsec:Nijenhuis}

Here, we recover the results on local integrations and realizations of Poisson-Nijenhuis structures \cite{BX,Peta,StiXu} using the spray groupoid perspective.

As before, let $l$ be a closed IM-2-form on $T^*_{\pi}M$. The \textbf{Nijenhuis torsion} of $l$ is defined by:
\[T_l\in \Gamma(\wedge^2T^*M\otimes TM), \] \[ T_l(u,v):=[l(u),l(v)]-l([l(u),v]+[u,l(v)])+l^2([u,v]), \ \ u,v\in \X(M),\]
where $l$ and its transposed are denoted the same. In our setting, this tensor appears as follows (the result below is inspired by results in \cite{CrGCS}):
\begin{lemma}\label{lema:torsion}
Consider the multiplicative 2-form on $G_V$, $\omega_{L^2}(u,v)=\omega(L^2u,v)$. The IM-2-form corresponding to $\omega_{L^2}$ is $(-l^2,-T_l)$; moreover, the following three equations are equivalent:
\[(1)\ \ d\omega_{L^2}=0;\ \ \ \ \ \ \ \ (2)\ \ T_l=0;\ \ \ \ \ \ \ \ (3)\ \  T_L=0.\]
%\begin{itemize}
%\item $d\omega_L^2=0$;
%\item $T_l=0$;
%\item $T_L=0$.
%\end{itemize}
\end{lemma}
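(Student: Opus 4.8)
The plan is to first compute the differentiation of $\omega_{L^2}$ explicitly, showing that its IM-form equals $(-l^2,-T_l)$, and then to read off the three equivalences from this. For the $l$-component I would use \eqref{eq:L_along_zero}: along the unit section $L$ acts as $v+a\mapsto l(v)+l(a)$ in the splitting $T_xG_V=T_xM\oplus A_x$, so $L^2$ acts there as $v+a\mapsto l^2(v)+l^2(a)$; plugging this into the description \eqref{eq:omega} of $\omega$ along $M$ gives $l_{\omega_{L^2}}(a)=u^*(i_a\omega_{L^2})=-l^2(a)$.

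For the $\nu$-component I would compute $d\omega_{L^2}$ along the unit section, since $\nu_{\omega_{L^2}}(a)=u^*(i_a d\omega_{L^2})$ and, being the $l$-part of the multiplicative $3$-form $d\omega_{L^2}$, it is determined by the values of $d\omega_{L^2}$ at points of $M$ via \eqref{eq:omegaatzero}; thus it suffices to evaluate $d\omega_{L^2}(a,v_1,v_2)$ for $x\in M$, $a\in A_x$ and $v_1,v_2\in T_xM$. The convenient point is that $\omega$ and $\omega_L$ are both closed multiplicative $2$-forms — their IM-forms $(-\mathrm{id},0)$ and $(-l,0)$ have vanishing $\nu$-part, so $d\omega=d\omega_L=0$ near $M$ — and that $L$ is $\omega$-self-adjoint (equivalently, $\omega_L$ is skew, equivalently $\pi(l(a),b)=\pi(a,l(b))$). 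Under these hypotheses a standard identity expresses $d\omega_{L^2}$ as a contraction of $\omega$ with the Nijenhuis torsion $T_L$ of $L$, namely $d\omega_{L^2}=i_{T_L}\omega$. Evaluating this at $x\in M$ through \eqref{eq:omega}, and using that $L$ covers $l$ so that $T_L$ is $\sigma$- and $\tau$-related to $T_l$ along $M$, the contraction collapses to $-\langle a,T_l(v_1,v_2)\rangle$, which gives $\nu_{\omega_{L^2}}=-T_l$. This identification is the main computational step: one must keep track of conventions and check that the $\ker\rho$-ambiguities in $T_L|_M$ drop out after pairing with $\omega$. (Alternatively one can bypass the $i_{T_L}\omega$ identity and expand $d\omega_{L^2}(a^R,\hat v_1,\hat v_2)$ directly via Cartan's formula, reorganizing the bracket terms into the combination $[L\cdot,L\cdot]-L[L\cdot,\cdot]-L[\cdot,L\cdot]+L^2[\cdot,\cdot]$ that defines $T_l$ after restriction to $M$.)

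Granting the first part, the equivalence $(1)\Leftrightarrow(2)$ is formal: by Remark~\ref{rem:formula_chain} differentiation is a chain map, so $d\omega_{L^2}$ is the multiplicative $3$-form whose IM-form is $d_{IM}(-l^2,-T_l)=(-T_l,0)$; since a multiplicative form with vanishing IM-form has vanishing germ around $M$ (uniqueness of integrations, Theorem~\ref{thm:tubular_Spencer}), we conclude $d\omega_{L^2}=0$ near $M$ if and only if $T_l=0$.

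Finally, for $(2)\Leftrightarrow(3)$: the implication $(3)\Rightarrow(2)$ is easy, since $L$ covers $l$ (it is $\sigma$-related to $l$) and the Nijenhuis torsion is natural under related submersions, so $T_L$ is $\sigma$-related to $T_l$; as $\sigma$ is a surjective submersion, $T_L=0$ forces $T_l=0$. For the converse, I would invoke that $L$ is the complete lift of $l$ to $T^*M$ (\cite{BX}, see also \cite{BDtensor}), whose Nijenhuis torsion is the complete lift of $T_l$ and therefore vanishes exactly when $T_l$ does; alternatively, if $T_l=0$ then the first part gives $\nu_{\omega_{L^2}}=0$, hence $d\omega_{L^2}=0$ and $i_{T_L}\omega=0$, and one concludes $T_L=0$ from the multiplicativity of $T_L$ together with the non-degeneracy of $\omega$ near $M$. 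I expect this last direction, $T_l=0\Rightarrow T_L=0$, to be the delicate point, since the vanishing of $i_{T_L}\omega$ does not on its own determine $T_L$ without exploiting its multiplicative (equivalently, complete-lift) nature.
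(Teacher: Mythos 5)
Your route is essentially the paper's: the $l$-component is computed along the unit section from \eqref{eq:L_along_zero} and \eqref{eq:omega}, the $\nu$-component comes from the torsion identity relating $d\omega_{L^2}$ to a contraction of $\omega$ with $T_L$ (this is precisely the paper's formula \eqref{eq:Marius}, quoted from \cite[Lemma 2.9]{CrGCS}, applied with $d\omega=d\omega_L=0$ and $L$ $\omega$-self-adjoint), and the equivalence $(1)\Leftrightarrow(2)$ is obtained, as in the paper, from the chain-map Remark~\ref{rem:formula_chain} together with uniqueness of germs of integrations. One small clarification on a point you flag: there is no $\ker\rho$-ambiguity to worry about, because by \eqref{eq:L_along_zero} the tensor $L$ preserves $TM\subset TG_V$ along the unit section and restricts there to $l$; since vector fields tangent to $M$ along $M$ have brackets tangent to $M$, one gets the exact equality $T_L(u,v)=T_l(u,v)$ for $u,v$ tangent to the unit section (this is what the paper uses), not merely $\sigma$-relatedness up to $\ker d\sigma$.

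The one place where your assessment is off is the alleged delicacy of $T_l=0\Rightarrow T_L=0$. The identity holds in the strong, non-alternated form: with $d\omega=d\omega_L=0$ and $\omega(L\cdot,\cdot)=\omega(\cdot,L\cdot)$, formula \eqref{eq:Marius} gives $i_{T_L(u,v)}\omega=-i_v i_u\, d\omega_{L^2}$ as an equality of $1$-forms, for every pair of tangent vectors $u,v$. Hence $d\omega_{L^2}=0$ forces $\omega(T_L(u,v),\cdot)=0$, and non-degeneracy of $\omega$ near $M$ gives $T_L=0$; conversely $T_L=0$ kills all double contractions of $d\omega_{L^2}$, hence $d\omega_{L^2}=0$. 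This settles $(1)\Leftrightarrow(3)$ with no appeal to the multiplicativity of $T_L$ or to complete lifts, and combined with $(1)\Leftrightarrow(2)$ it yields $(2)\Leftrightarrow(3)$. So your ``alternative'' argument already closes the loop, and the extra input you propose (multiplicativity of $T_L$, or naturality of Nijenhuis torsion under complete lifts) is unnecessary; the complete-lift route would in any case require a separate uniqueness statement for multiplicative $(1,1)$-tensors that is not developed here.
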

\begin{proof}
The result is based on the calculations from \cite{CrGCS}. First, we note a general formula: for any 2-form $\Omega\in \Omega^2(N)$ and any $(1,1)$ tensor $A:TN\to TN$ which satisfy $\Omega(AX,Y)=\Omega(X,AY)$, we have that \cite[Lemma 2.9]{CrGCS}
\begin{equation}\label{eq:Marius}
i_{T_A(u,v)}\Omega=(i_v\circ i_{Au}+i_{Av}\circ i_u)d\Omega_A-i_{Av}\circ i_{Au}d\Omega-i_{v}\circ i_ud\Omega_{A^2},
\end{equation}
for all $u,v\in TN$, where $\Omega_{A^k}(u,v):=\Omega(A^ku,v)$.
Applying (\ref{eq:Marius}) to the multiplicative symplectic structure $\omega$ on $G_V$, and to the $(1,1)$ tensor $L$, we obtain that:
\[i_{T_L(u,v)}\omega=-i_{v}\circ i_ud\omega_{L^2}.\]
In particular, this shows that (1) and (3) are equivalent. Denote by $(\underline{l},\underline{\nu})$ the IM-2-form corresponding to $\omega_{L^2}$. By the differentiation procedure (\ref{eq:differentiation}), and by the explicit description of $\omega$ and $L$ along the zero-section, we have that:
\[\underline{l}(a)(v)=\omega_{L^2}(a,v)=\omega(l^2(a),v)=-\langle l^2a\ | \ v\rangle,\]
\[\underline{\nu}(a)(v,w)=d\omega_{L^2}(a,v,w)=-\omega(T_L(v,w),a)=-\omega(T_l(v,w),a)=-T_l(a)(v,w),
\]
where we have also used that $T_L(u,v)=T_l(u,v)$ for $u,v$ tangent to the unit section. This implies that $\omega_{L^2}$ integrates $(-l^2,-T_l)$. Finally, by Remark \ref{rem:formula_chain}, $\omega_{L^2}$ is closed iff $T_l=0$.
\end{proof}

Assume that $(\pi,l)$ is a \textbf{Poisson-Nijenhuis structure}, i.e.\ $\pi$ is a Poisson structure and $l$ is a closed IM-2-form on $T^*_{\pi}M$ whose Nijenhuis torsion vanishes. The above shows that
\[(G_V,\omega,L)\rr (M,\pi,l)\]
is a \textbf{local symplectic Nijenhuis groupoid} \cite{StiXu}, i.e.\ $(G_V,\omega)$ is a local symplectic groupoid, $L$ is multiplicative and $(\Pi=\omega^{-1},L)$ is a Poisson-Nijenhuis structure inducing on the base the Poisson-Nijenhuis structure $(\pi,l)$. By the general theory of Poisson Nijenhuis structures \cite{Yvette}, the bivector fields $\pi_{l^k}$, $k\geq 0$, are all Poisson structures, which pairwise commute; and similarly, at the groupoid level, the local multiplicative bivector fields $\Pi_{L^k}$, $k\geq 0$, are are all Poisson structures, which pairwise commute, and by the discussion in the beginning of Section \ref{sec:closed2forms}, the source map is a Poisson map for all these structures (resp.\ the target map is anti-Poisson):
\[\sigma:(G_V,\Pi_{L^k})\rmap(M,\pi_{l^k}),\ \ k\geq 0.\]
On the other hand, by using relation (\ref{eq:Marius}) for $\Omega=\omega_{L^k}$, one proves inductively that $d\omega_{L^{k}}=0$ for all $k\geq 0$. So, all these forms are multiplicative and closed, and their infinitesimal IM-forms are $(-l^k,0)$. This shows that:
\[\omega_{L^k}=\int_{0}^1(\phi_V^t)^*(l^k)^*\omega_0\ dt.\]
If $l$ is invertible, the above discussion holds for all $k\in \mathbb{Z}$, and we obtain symplectic realizations:
\[ \ \ \ \ q=\sigma:(G_V,\omega_{L^{k}})\rmap (M,\pi_{l^{-k}}), \ \ \ \ k\in \mathbb{Z}.\]

\begin{remark}
In \cite{BX}, the authors consider further \textbf{holomorphic Poisson structures}, i.e.\ Poisson Nijenhuis structures $(M,\pi,j)$ such that $j^2=-\mathrm{Id}$. In this case, the local symplectic Nijenhuis groupoid $(G_V,\omega,J)$ which is obtained by the above procedure is automatically a local \textbf{holomorphic symplectic} groupoid. This follows by integrating the closed IM-2-form $(-j^2,0)=(\mathrm{Id},0)$, and conclude that $\omega_{J^2}=-\omega$, hence $J^2=-\mathrm{Id}$.
\end{remark}

\subsubsection{Local integration of generalized complex structure}

Another setting where closed IM-2-forms on Poisson manifolds appear are generalized complex manifolds \cite{Gualtieri}. Let us describe the integration of these structures to the corresponding spray groupoid, in the spirit of \cite{CrGCS} (see also \cite{BaGu} for a more complete discussion of integration of generalized complex structures).

Following \cite{CrGCS}, a \textbf{generalized complex structure} can be described by a triple $(\pi,l,\varpi)$, where $\pi$ is a Poisson structure on $M$, $l$ is a closed IM-2-form on $T^*_{\pi}M$, and $\varpi$ is a 2-form on $M$ satisfying:%such that the conditions are satisfied:
\[l^2+\pi^{\sharp}\varpi^{\flat}=-\mathrm{Id},\ \ T_l(u,v)=\pi^{\sharp}(i_{v}i_u d\varpi),\]
\[l\circ \varpi=\varpi\circ l, \ \ d\varpi_{l}(u,v,w)=d\varpi(lu,v,w)+d\varpi(u,lv,w)+d\varpi(u,v,lw),\]
where, as usual, $\varpi_l(u,v):=\varpi(l(u),v)$, and $l$ and its transpose are denoted by the same symbol.

Consider the local integration $(G_V,\omega,\omega_L)$ of $(M,\pi,l)$, as in the beginnin of Section \ref{sec:closed2forms}. Following \cite{CrGCS}, the first set of equations has an interpretation in terms of IM-2-forms (for the second, we are not aware of such an interpretation); namely, the IM forms \[-(\mathrm{Id},0),\ \  -(l^2,T_l)\ \ \textrm{and}\ \  (\varpi^{\flat}\circ \pi^{\sharp}, d\varpi^{\flat}\circ \pi^{\sharp})\]
integrate to the multiplicative forms $\omega$, $\omega_{L^2}$ (Lemma \ref{lema:torsion}) and $\tau^*(\varpi)-\sigma^*(\varpi)$ (Remark \ref{rem:mult_exact}), respectively; therefore the first conditions are equivalent to:
\[\omega+\omega_{L^2}=\tau^*(\varpi)-\sigma^*(\varpi).\]
The triple $(\Pi=\omega^{-1},L,\sigma^*(\varpi)-\tau^*(\varpi))$ is a local multiplicative generalized complex structure on the spray groupoid $G_V$, and which is a local integration in the sense of \cite{CrGCS} of $(\pi,l,\varpi)$.

\subsection{Local integration of twisted Dirac structures}
Generalizing Poisson structures, Dirac structures provide another example of Lie algebroids which carry natural IM 2-forms \cite{BC,BCWZ}. We discuss their local integrations obtained from sprays.

Fix a closed 3-form $H\in \Omega^3(M)$. Recall \cite{BCWZ} that an \textbf{$H$-twisted Dirac structure} is a subbundle $L\subset TM\oplus T^*M=:\TT M$ over $M$ which is lagrangian with respect to the natural symmetric pairing on $\TT M$ and whose space of sections $\Gamma(L)$ is involutive with respect to the $H$-twisted Courant bracket on $\Gamma (\TT M)=\X^1(M) \oplus \Omega^1(M)$:
\[\Cour{v+\alpha,w+\beta}_{H} = [v,w]+\Lie_v \beta - i_w d\alpha+i_wi_vH, \ \ v,w \in \X^1(M), \alpha,\beta \in \Omega^1(M).\]
Then $(L,\Cour{\cdot,\cdot}_{H},\rho)$ is a Lie algebroid with anchor $\rho(v+\alpha)=v$, which carries the canonical IM-2-form $(-\br, H^{\flat}\circ \rho)$ given by:
\begin{align*}
\br:L\rmap& T^*M,\ \ \ \  v+\alpha\mapsto\alpha,\\
H^{\flat}\circ \rho:L\rmap \wedge^2 &T^*M,\ \ \ \   v+\alpha\mapsto i_{v}H.
\end{align*}
The linear form on $L$ corresponding to $(-\br, H^{\flat}\circ \rho )$ is given by
\[\Lambda_{(-\br, H^{\flat}\circ \rho )}=\br^*\omega_0+\Lambda_H\in \Omega^2_{\mathrm{lin}}(L), \ \ \textrm{where}\ \Lambda_H:=\widetilde{H^{\flat}\circ \rho}.\]

Let $V$ be a spray on $L$ with corresponding local spray groupoid $G_V\rr M$. Consider the local multiplicative 2-form on $G_V$ integrating $(-\br,H^{\flat}\circ \rho )$:
\[\omega=\int_0^1(\phi^t_V)^*(\br^*\omega_0+\Lambda_H)\ dt.\]
Since integration is a chain map (Remark \ref{rem:formula_chain}), the multiplicative $3$-form $d\omega$ integrates the IM-$3$-form $(H^{\flat}\circ \rho,0)$, therefore, by Remark \ref{rem:mult_exact}, we obtain that $\omega$ is \textbf{relatively $H$-closed}, i.e.\
\[d\omega=\tau^*H-\sigma^*H.\]

Using the canonical decomposition $T_xL=T_xM\oplus L_x$, for $x\in M\subset L$, equation \eqref{eq:omegaatzero} and that $L$ is isotropic, we obtain that
	\begin{equation}\label{eq:omega_1}
\omega_{x}((v_1,w_1+\alpha_1),(v_2,w_2+\alpha_2))=\langle \alpha_2\ | \ v_1\rangle-\langle\alpha_1\ |\ v_2+w_2\rangle,
\end{equation}
for all $(v_1,w_1+\alpha_1),(v_2,w_2+\alpha_2)\in T_{x}G_V$. This formula can be used to show that, along the unit section, $\omega$ satisfies the following so-called \textbf{robustness condition} \cite{BCWZ}
\[\ker(\omega_a)\cap \ker(d_a\sigma) \cap \ker(d_a\tau) = 0,\ \  a \in G_V.\]
This condition is open, therefore it is satisfied around the unit section. Thus, $(G_V,\omega,H)\rightrightarrows M$ defines a local version of the \textbf{presymplectic groupoids} defined in \cite[Definition 2.1]{BCWZ}. The local groupoid version of \cite[Theorem 2.2]{BCWZ} also holds, and therefore $\sigma$ pushes the graph of $\omega$ forward to an $H$-twisted Dirac structure on $M$, and this Dirac structure is $L$\footnote{Our conventions differ slightly from \cite{BCWZ}: in \emph{loc.\ cit.\ }the presymplectic structure of a presymplectic groupoid is the negative of our presymplectic structure, and this makes the target map forward Dirac.}, as it can be straightforwardly checked at points of the unit section using (\ref{eq:omega_1}), just as in the Poisson case. Moreover, the source-target map becomes a \textbf{presymplectic realization} \cite{BCWZ}:
\begin{equation}\label{eq:presymplectic_realization}
%\sigma\times\tau: (G_V,\omega,\sigma^*H-\tau^*H)\dto (M,L,H)\times (M,-L,-H),
\sigma\times\tau: (G_V,\omega)\dto (M,L)\times (M,-L),
\end{equation}
i.e.\ $\sigma\times \tau$ is a forward Dirac map and $\ker(\sigma\times \tau)\cap \ker(\omega)=0$. Here $G_V$ is endowed with the graph of $\omega$, which is $(\sigma\times \tau)^*(H,-H)$-twisted, and $M\times M$ with the product Dirac structure $L\times -L$, where $-L:=\{v-\alpha :  v+\alpha\in L\}$ is the opposite Dirac structure, which is $(H,-H)$-twisted.

For $H=0$, we have that $\omega=\int_{0}^1(\phi_V^t)^*\br^*\omega_0\ dt$. In this case, the fact that (\ref{eq:presymplectic_realization}) is a presymplectic realization was proven also in \cite{IP}, with a direct Dirac-geometric argument.
%
%\begin{example}
% A particular case of $H$-twisted Dirac structure is an \textbf{$H$-twisted Poisson structure} defined by a bivector $\pi \in \X^2(M)$ satisfying $[\pi,\pi] = 2 (\wedge^3 \pi^\sharp)H \in \X^3(M)$. In this case, the $H$-twisted Dirac structure is given by the graph
% $L=\{ (\pi^\sharp(\alpha) ,\alpha): \alpha \in \Omega^1(M)\} \simeq T^*M$, and $\br\simeq id_{T^*M}$. The above construction yields the local twisted presymplectic groupoid with support $G_V = T^*M$ integrating it.
%\end{example}

\subsection{Local integration of multiplicative distributions}\label{section:mult}

We start by recalling the local groupoid version of multiplicative distributions; as a reference to multiplicative distributions on Lie groupoids, see \cite{Maria}.

By a \textbf{local distribution} on a local Lie groupoid $G\rightrightarrows M$ we mean a vector subbundle $H\subset TG$ supported on a neighborhood $U\subset G$ of $M$. A local distribution $H\subset TG$ is called \textbf{multiplicative} if it defines a local Lie subgroupoid over $TM$ of the tangent groupoid $TG\rightrightarrows TM$, i.e.\ $TM\subset H$ and there are open neighborhoods $U_1\subset G$ and $U_2\subset G_{\sigma}\times_{\tau}G$ of $M$ such that \[d\iota(H|_{U_1})\subset H\ \ \ \textrm{and}\ \  d\mu\big((H_{d\sigma}\times_{d\tau}H)|_{U_2}\big)\subset H.\]

Any pointwise surjective $E$-valued local multiplicative 1-form $\omega\in \Omega^1_{\mathrm{loc}}(G;\sigma^*(E))$ ($E$ any representation of $G$) gives rise to a multiplicative distribution: $H:=\ker (\omega)$. %Let $E$ be a representation of $G$, and consider an $E$-valued local multiplicative 1-form $\omega\in \Omega^1_{\mathrm{loc}}(G;\sigma^*(E))$, such that $\omega_g:T_gG\to E_{\sigma(g)}$ is surjective for $g$ in a neighborhood of $M$. Then, $H:=\ker (\omega)$ is a local multiplicative distribution.
In fact, every local multiplicative distribution is associated with a surjective multiplicative one-form as above, and this gives the recipe to construct the infinitesimal counterpart of multiplicative distributions. To see this, let $H\subset TG$ be a local multiplicative distribution, and denote
\[H^{\sigma}:=T^{\sigma}G\cap H\ \ \ \textrm{and}\ \ \ \g:=H^{\sigma}|_{M}\subset A.\]
There is a canonical adjoint-like local action of $G$ on the vector bundle $A/\g$, defined as follows:
\[g\cdot[v]:=\big[\theta_G (d\mu(w,v))\big]\in A_{\tau(g)}/\g_{\tau(g)},\ \ \ [v]\in A_{\sigma(g)}/\g_{\sigma(g)},\]
where $w\in H_g$ is any element satisfying $d\sigma(w)=d\tau(v)$ and $\theta_G$ is the Maurer-Cartan form of $G$. The existence of $w$ is insured by the fact that $d\sigma:H\to TM$ is onto, at least in a neighborhood of $M$, and that the action is well-defined is a consequence of the fact that $H$ is a subgroupoid. The associated local multiplicative $1$-form $\omega\in \Omega^1(G;\sigma^*(A/\g))$ is given by:
\[\omega_g(v)=g^{-1}\cdot \big[\theta_G(v-u)\big]\in A_{\sigma(g)}/\g_{\sigma(g)},\ \ \ v\in T_gG,\]
where $u\in H_g$ is any element such that $d\sigma(v)=d\sigma(u)$. Again, using that $H$ is a subgroupoid, one can prove that, in a small enough neighborhood of $M$, $\omega$ is well-defined, that it is multiplicative, and that $\ker(\omega)=H$.

The infinitesimal counterpart of multiplicative distributions are the following objects on a Lie algebroid $A$: a subbundle $\g\subset A$ and a Spencer operator of the form
\[D:\Gamma(A)\rmap \Omega^1(M;A/\g), \ \ \ \ l=\mathrm{pr}:A\rmap  A/\g.\]
The action of $A$ on $A/\g$ is recovered from the relations \eqref{eq:compatibility} as the flat $A$-connection: 
\[\nabla:\Gamma(A)\times\Gamma(A/\g)\rmap\Gamma(A/\g),\ \ \nabla_a(\mathrm{pr}(b))=\iota_{\rho(b)}D(a)+\mathrm{pr}([ a,b]).\]

The differentiation procedure is as follows. Given a local multiplicative distribution $H\subset TG$, the corresponding Spencer operator is explicitly given by $\g=(H\cap T^\sigma G)|_M=H\cap A$, and
\begin{equation}\label{eq:mult}
\iota_vD(a)=[\widetilde{v},a^R] \mod \g,
\end{equation}
for any $a\in\Gamma(A)$, $v\in\X(M)$, where $a^R\in\Gamma(T^\sigma G)$ is the right invariant vector field induced by $a$, and $\widetilde{v}\in\Gamma(H)$ is any extension of $v$ in $H$ around $M$ satisfying that $d\sigma(\widetilde v)=v$. In this case, we say that $H$ integrates $D$.

\vspace*{0.5cm}

For a fixed subbundle $\g\subset A$ and a fixed local Lie groupoid $G\rr M$, the above constructions give one-to-one correspondences between:
\begin{enumerate}
\item germs around $M$ of multiplicative distributions $H\subset TG$ satisfying $H\cap A=\g$;
\item pairs consisting of a germ of a local representation of $G$ on $A/\g$ and a germ of a local multiplicative 1-form $\omega\in \Omega^1_{\mathrm{loc}}(G;\sigma^*(A/\g))$ satisfying $\omega|_{A}=\mathrm{pr}:A\to A/\g$;
\item pairs consisting of a representation of $A$ on $A/\g$ and a degree one Spencer operator of the form $(\mathrm{pr},D)$ on $A$ with values in $A/\g$.
\end{enumerate}

For a spray groupoid $G_V\rr M$, Theorem \ref{theorem:Spencer} makes the integration procedure, i.e.\ the passage from item 3 to item 2 above, also explicit. Namely, given a representation of $A$ on $A/\g$ and a compatible Spencer operator $(\mathrm{pr},D)$ as above, then the corresponding local multiplicative 1-form is given by
\[\omega(v)=\int_0^1T_{\phi_{V}^{\bullet}(a)}^{0,t}\cdot \Lambda_{(\mathrm{pr},D)}\big((\phi_V^t)_*v\big),\ \  \ v\in T_{a}G_V,\]
where $\Lambda_{(\mathrm{pr},D)}\in \Omega^{1}_{\mathrm{lin}}(A;A/\g)$ is the linear form associated to $(\mathrm{pr},D)$ (Lemma \ref{lem:one_to_one}), and $T_{\phi_{V}^{\bullet}(a)}^{0,t}$ is the parallel transport with respect of $\nabla$ from $t$ to $0$ along the path $\phi_{V}^{\bullet}(a)$ (Subsection \ref{subsection:int_spray}).

\subsection{Local integration of Jacobi structures}

Another type of geometric structures which can be encoded by Spencer operators are Jacobi structures. These were first considered independently by Kirillov \cite{Kirillov} and Lichnerowicz \cite{Lichnerowicz}; here we follow the approach to Jacobi manifolds in terms Spencer operators developed in \cite{CS}.
%, and also \cite{CrainicZhu} and

A Jacobi manifold $(M,L,\{,\})$ consists of a line bundle $L$ over the manifold $M$ together with a Lie bracket $\{,\}$ on the space of sections of $L$ which is local:
\[\text{supp}\{u,v\}\subset \text{supp}(u)\cap \text{supp}(v), \ \ \ u,v\in \Gamma(L).\]

The Jacobi bracket induces a Lie algebroid structure on the first jet bundle of $L$
\[q:J^1L \rmap M.\]
The structure maps of this Lie algebroid are uniquely defined by the relations
\begin{equation}\label{eq:bra}\{u,fv\}=f\{u,v\}+\mathcal{L}_{\rho(j^1u)}(f)v, \ \ [j^1u,j^1v]=j^1\{u,v\}, \ f\in C^\infty(M), u,v\in\Gamma(L).\end{equation}
Further, the Lie algebroid $J^1L$ carries a degree one Spencer operator \[D:\Gamma(J^1L)\rmap\Omega^1(M;L),\ \ l=\mathrm{pr}:J^1L\rmap L,\]
which is uniquely determined by the condition $D(j^1u)=0$, $u\in\Gamma(L)$, and the Leibniz identity with respect to $\mathrm{pr}$. The linear one-form associated to $(\mathrm{pr},D)$ is the canonical contact form $\Lambda\in\Omega^1_{\mathrm{lin}}(J^1L;L)$
\[\Lambda_{j^1_xu}=d_{x}(\mathrm{pr}-u\circ q):T_{j^1_xu}(J^1L)\rmap L_x,\]
where we have used the canonical identification $L_x\cong T_{u(x)}(L_x)$.

Finally, the underlying representation $\nabla:\Gamma(J^1L)\times \Gamma(L)\to\Gamma(L)$ is defined by
\begin{equation}
\label{eq:na}\nabla_{j^1u}(v)=\{u,v\}.
\end{equation}

%Note that the Spencer operator $(\mathrm{pr},D)$ and the associated 1-form $\Lambda$ do not depend on the Jacobi structure on $L$. The above construction gives a one-to-one correspondence between Jacobi structures on $L$ and Lie algebroid structures on $J^1L$ together with an action on $L$, for which the pair $(\mathrm{pr},D)$ defined above is a Spencer operator \cite{CS}.

\begin{example}\textbf{(Contact structures as Jacobi brackets)}\label{ex:contact}
Just as symplectic structures correspond to non-degenerate Poisson structures, contact structures correspond to non-degenerate Jacobi structures. Let $H$ be a contact structure on $M$, i.e.\ $H\subset TM$ is a codimension-one distribution which is maximally non-integrable, in the sense that the pairing
\begin{equation}\label{eq:pairing}
(\cdot,\cdot): H_x\times H_x \rmap L_x:=T_xM/H_x,\ \ \  (X,Y)\mapsto [\widetilde{X},\widetilde{Y}]_x  \mod  H_x
\end{equation}
is non-degenerate, where $\widetilde{X}$ and $\widetilde{Y}\in \Gamma(H)$ denote extensions of $X$ and $Y$, resp. The Lie algebra of \textbf{Reeb vector fields} of the contact structure $H$, denoted $\X^{\text{Reeb}}(M,H)\subset \X(M)$, consists of vector fields $R\in \X(M)$ satisfying $[R,\Gamma(H)]\subset \Gamma(H)$. The projection $TM\to L$ induces a linear isomorphism $\X^{\text{Reeb}}(M,H)\cong \Gamma(L)$. This endows $L$ with a canonical Jacobi bracket $\{,\}_H$.
\end{example}

A \textbf{Jacobi map} between Jacobi manifolds $(M_1,L_1,\{\cdot,\cdot\}_{L_1})$ and $(M_2,L_2,\{\cdot,\cdot\}_{L_2})$ consists of a smooth map $\psi:M_1\to M_2$, and a line bundle isomorphism $\Psi:\psi^*L_2\diffto L_1$ such that the map induced on sections $\Gamma(L_2)\to\Gamma (L_1)$, $u\mapsto \Psi(u\circ \psi)$ is a Lie algebra map. Following the previous example, a \textbf{contact realization} of a Jacobi manifold $(M,L,\{,\})$ is a contact manifold $(\Sigma,H)$ and a Jacobi map
\[(\psi,\Psi):(\Sigma,L_H:=T\Sigma/H, \{,\}_H) \rmap (M,L,\{,\})\]
such that $\psi$ is a surjective submersion.

A \textbf{local contact groupoid} consists of a local Lie groupoid $G\rr M$ together with a local multiplicative distribution $H\subset TG$ which is contact. Let $(G,H)$ be a local contact groupoid with Lie algebroid $A$. As discussed in the previous subsection,
$G$ has a canonical local action on the line bundle $L:=A/(H\cap A)$ and a canonical multiplicative 1-form $\omega:TG\to L$, which satisfies $H=\ker \omega$, and so, it induces a fibre-wise invertible line bundle map
\[\overline{\omega}:L_H:=TG/H\diffto \sigma^*L.\]
%Note that the inclusion $T^sG\subset TG$ induces a line bundle isomorphism:
%\[L^s_H:=T^sG/H^s\cong TG/H=L_H.\]

By adapting the arguments from \cite{CS} to this local setting, one can prove that:
\begin{itemize}
\item There is a unique Jacobi structure $(L,\{\cdot,\cdot\})$ such that $(\sigma,\overline{\omega})$ is a contact realization. This holds because: the Jacobi bracket corresponding to $H$ comes from the isomorphism $\Gamma(L_H)\cong \X^{\text{Reeb}}(G,H)$; and we have the following equality around $M$
\[\omega\big(\X_{\text{L-inv}}^{\text{Reeb}}(G,H)\big)=\sigma^*(\Gamma(L))\subset \Gamma(\sigma^*(L)),\]
where $\X_{\text{L-inv}}^{\text{Reeb}}(G,H)$ denotes the space of local Reeb vector fields on $G$ that are left-invariant (and so, tangent to the $\tau$-fibres).
\item There is a canonical isomorphism of Lie algebroids $\Phi:J^1L\diffto A$, between the Lie algebroid $J^1L$ corresponding to the Jacobi structure on $L$ and the Lie algebroid $A$ of $G$, which is uniquely determined by the relation:
\[\Phi(j^1u)=d\iota(a^L_u)|_{M}\in \Gamma(A), \ \ u\in \Gamma(L),\]
where $a_u^L\in \X_{\text{L-inv}}^{\text{Reeb}}(G,H)$ denotes the left-invariant Reeb vector field satisfying $\omega(a_u^L)=u\circ \sigma$, and $\iota$ denotes the inversion of $G$.
\item Under the isomorphism $\Phi:J^1L\diffto A$, the multiplicative form $\omega$ differentiates to the canonical Spencer operator $(\mathrm{pr},D)$ on $J^1L$.
\end{itemize}

Next, let us explain what our spray method gives in the case of Jacobi manifolds. Consider a Jacobi structure $(M,L,\{\cdot,\cdot\})$. Let $V$ be a spray on the associated Lie algebroid $J^1L$, and let $G_V\rr M$ be the corresponding local spray groupoid. By the discussion in the previous section, the multiplicative $L$-valued form integrating the Spencer operator $(\mathrm{pr},D)$ is given by:
\[\omega\in \Omega^{1}(G_V,\sigma^*(L)), \ \ \ \omega_a=\int_{0}^1 (ta)^{-1}\cdot(\phi_V^t)^*\Lambda_{(\mathrm{pr}, D)}\ dt.\]
Moreover, the corresponding local multiplicative distribution
$H:=\ker(\omega)\subset TG_V$
is contact. For this note that
\begin{itemize}
 \item $\ker(\mathrm{pr})\subset J^1L$ can be canonically identified with $T^*M\otimes L$;
 \item along the unit section, we have that $H|_M=TM\oplus \ker(\mathrm{pr})\cong TM\oplus T^*M\otimes L$; and, under this isomorphism, for $a\in \Omega^1(M,L)$ we have that $D(a)=-a$;
\item along the unit section, by (\ref{eq:mult}), the pairing (\ref{eq:pairing}) satisfies:
  \[(X,a)=\mathrm{pr}([\widetilde{X},a^R])_x=i_{X}D(a)=-i_Xa\in \Gamma(L),\]
  for all $X\in \X(M)$, $a\in \Omega^1(M,L)$, where $\widetilde{X}\in\Gamma(H)$ is an extension of $X$ such that $d\sigma(\widetilde X)=X$, and $a^R\in\Gamma(T^\sigma G)$ is the right invariant vector field induced by $a$.
 \end{itemize}
The last item implies that the pairing is non-degenerate along $M$; hence it is so in a neighborhood of $M$, which can be taken to be the domain of $H$. So $(G_V,H)\rr M$ is a local contact groupoid. Finally, by the discussion above, we have that $(G_V,H)\rr M$ integrates the Jacobi manifold $(M,L,\{,\})$, in the sense that the the source map $\sigma:G_V\to M$ together with $\overline{\omega}:TG_V/H \to\sigma^* L$ define a contact realization (the proof of \cite[Theorem 2]{CS} applies directly in this local case).

\begin{example}
Assume that $L$ is trivializable %(or restrict to an open over which $L$ is trivializable)
and fix an isomorphism $L\cong \R\times M$. Then there exist a bivector field $\pi\in \X^2(M)$ and a vector field $R\in \X(M)$ such that \cite{Kirillov}
\[\{u,v\}=\pi(du,dv)+\langle R , du \rangle v-u\langle R , dv \rangle, \ \ u,v\in \Gamma(L)\cong \C(M).\]
The Jacobi identity for $\{\cdot,\cdot\}$ is equivalent to the equations:
$[\pi,\pi]=2R\wedge\pi,  \  [\pi,R]=0.$
In this case, we have the identification
\[J^1L\cong  \R\times T^*M, \ \ j^1_xu\mapsto (u(x),d_xu).\]
The canonical Spencer operator acts as follows:
\[D:\C(M)\oplus \Omega^1(M)\to \Omega^1(M), \ \ D(u,a)=du-a.\]
The corresponding linear 1-form is the contact form:
\[\Lambda=d\mathrm{pr}_1-\mathrm{pr}_2^*\alpha_0\in \Omega^1(\R\times T^*M),\]
where $\alpha_0\in \Omega^1(T^*M)$ is the tautological 1-form.

For the Lie algebroid structure on $J^1L\cong \R\times T^*M$, see e.g.\ \cite{CrainicZhu}; let us just recall the anchor:
\[\rho(u,a)=\pi^{\sharp}(a)-u R,\]
and the action of $J^1L\cong \R\times T^*M$ on $L\cong \R\times M$ \[\nabla_{(u,a)}v=\rho(u,a)(v)+\langle R,a\rangle v,\]
i.e.\ the action corresponding to the Lie algebroid cocycle
\[\widetilde{R}:J^1L\cong \R\times T^*M\to \R, \ \ \ (u,a)\mapsto \langle R_{q(a)},a \rangle.\]

Let $V$ be a spray for $J^1L\cong \R\times T^*M$, with spray groupoid $G_V$.
By Subsection \ref{subsection:cocyles}, the above cocycle integrates to the groupoid cocycle
\[G_V\dto \R, \ \ (u,a)\mapsto\int_0^1 \langle R,a_{t}\rangle\ dt=\int_0^1\widetilde{R}(\phi_V^t)\ dt,\]
where $a_t$ is the second component of the flow of $V$, i.e.\ $\phi_V^t(u,a)=(u_t,a_t)$. Therefore, the action of $G_V$ on $L\cong \R\times M$ is:
\[(u,a)\cdot (v,x)=\big(e^{\int_0^1\widetilde{R}(\phi_V^t) dt}v, \tau(u,a)\big)=\big(e^{\int_0^1 \langle R,a_{t}\rangle dt}v, q(a_1)\big),\ \ a\in T^*_xM, \ u,v\in \R.\]
Since $\phi^s_V(tu,ta)=t\phi_V^{st}(u,a)$, note that $(tu,ta)\cdot (v,x)=(e^{\int_0^t\langle R,a_{s}\rangle ds}v,\tau(tu,ta))$. Thus, the multiplicative 1-form integrating $(\mathrm{pr},D)$ becomes:
\[\omega_{(u,a)}=\int_0^1e^{-\int_0^t\langle R, a_s\rangle ds }(\phi_V^t)^*(d\mathrm{pr}_1-\mathrm{pr}_2^*\alpha_0)\ dt.\]
\end{example}


\begin{thebibliography}{9}
%\bibitem{AlmMol} R.~Almeida, P.~Molino, Suites d'Atiyah et feuilletages transversalement complets, Comptes rendus Acad.\ Sci.\ Paris s\'erie I Math.\ \textbf{300} (1985), no.\ 1, 13--15.
\bibitem{CamiloMarius} C.~Arias~Abad, M.~Crainic, The Weil algebra and the Van Est isomorphism, \emph{Ann. Inst. Fourier (Grenoble)} \textbf{61} (2011), no.\ 3, 927--970.
\bibitem{BaGu} M.~Bailey, M.~Gualtieri, Integration of generalized complex structures, preprint (2016) arXiv:1611.03850.
\bibitem{BX} D.~Broka, P.~Xu, Symplectic realizations of holomorphic Poisson manifolds, preprint (2015) arXiv:1512.08847.
\bibitem{BC} H.~Bursztyn, A.~Cabrera, Multiplicative forms at the infinitesimal level, \emph{Math. Ann.} Vol. 353, (2012) no.\ 5, 663--705.
\bibitem{BCWZ} H.~Bursztyn, M.~Crainic, A.~Weinstein, C.~Zhu, Integration of
twisted Dirac brackets, {\em Duke Math. J.} {\bf 123} (2004), 549--607.
\bibitem{BDtensor} H.~Bursztyn, T.~Drummond, Lie theory of multiplicative tensors, preprint.
%\bibitem{CD} A.~Cabrera, B.~Dherin, Formal symplectic realizations, preprint (2015) arXiv:1212.5999v2.
\bibitem{CDr} A.~Cabrera, T.~Drummond, Van est isomorphism for homogeneous cochains, \emph{Pac. J. Math.}  287-2 (2017), 297--336. 
\bibitem{love} A.~Cabrera, I.~M\u{a}rcu\cb{t}, M.A.~Salazar, On local integration of Lie brackets, to appear in \emph{J. Reine Angew. Math.}
\bibitem{CDW} A.~Coste, P.~Dazord, A.~Weinstein, Groupo\"ides symplectiques, \emph{Publications du D\'epartement de Math\'ematiques. Nouvelle S\'erie. A,} Vol. 2, iii, 1--62, Publ. D\'ep. Math. Nouvelle S\'er. A, 87-2, Univ. Claude-Bernard, Lyon, 1987.
%\bibitem{Cr} M.~Crainic, Differentiable and algebroid cohomology, van Est isomorphisms, and characteristic classes, \emph{Comment. Math. Helv.} \textbf{78} (2003), no.\ 4, 681--721.
\bibitem{CrGCS} M.~Crainic, Generalized complex structures and Lie brackets, \emph{Bull. Braz. Math. Soc. (N.S.)}, \textbf{42} (2011), no. 4, 559--578.
\bibitem{CF1} M.~Crainic, R.L.~Fernandes, Integrability of Lie brackets, \emph{Ann. of Math. (2)} \textbf{157} (2003), no. 2, 575--620.
\bibitem{CF2} M.~Crainic, R.L.~Fernandes, Lectures on integrability of Lie brackets, \emph{Geom. Topol. Monogr.}, \textbf{17}, (2011), 1--107.
\bibitem{CM} M.~Crainic, I.~M\u{a}rcu\cb{t}, On the existence of symplectic realizations, \emph{J.\ Symplectic Geom.}, \textbf{9}, (2011), no. 4, 435--444.
%\bibitem{CrStr} M.~Crainic, I.~Struchiner, On the linearization theorem for proper Lie groupoids, \emph{Ann. Sci. \'Ec. Norm. Sup\'er. (4)} \textbf{46} (2013), no. 5, 723--746.
\bibitem{Maria} M.~Crainic, M.A.~Salazar, I.~Struchiner, Multiplicative forms and Spencer operators, \emph{I. Math. Z.}, \textbf{279}, (2015), 939-979.
\bibitem{CS} M.~Crainic, M.A.~Salazar, Jacobi structures and Spencer operators, \emph{J.\ Math.\ Pures Appl. (9)}, \textbf{103}, (2015), no.\ 2, 504--521.
\bibitem{CrainicZhu} M.\ Crainic, C.\ Zhu, Integrability of Jacobi and Poisson structures, \emph{Ann.\ Inst.\ Fourier (Grenoble)} \textbf{57} (2007) 1181--1216.
\bibitem{DrE} T.~Drummond, L.~Egea, Differential forms with values in VB-groupoids, Preprint 	arXiv:1804.05289.
%\bibitem{DK} J.~Duistermaat, J.~Kolk, \emph{Lie groups, Universitext}, Springer-Verlag, New York, 2000.
%\bibitem{CrMeStr} M.~Crainic, J.~N.~Mestre, I.~Struchiner, \emph{Deformations of Lie groupoids} preprint arXiv:1510.02530.
%\bibitem{Fe} R.~Loja~Fernandes, Connections in Poisson geometry. I. Holonomy and invariants. \emph{J. Differential Geom.} \textbf{54} (2000), no. 2, 303--365.
%\bibitem{Fe} R.L.~Fernandes, Connections in Poisson geometry I. Holonomy and invariants, \emph{J.\ Differential Geom.} \textbf{54}, (2000), no. 2, 303--365.
%\bibitem{VEst} W.~T.~van~Est, Rapport sur les S-atlas, Transversal structure of foliations (Toulouse, 1982). \emph{Ast\'erisque} No. \textbf{116} (1984), 235--292.
\bibitem{Fe1} R.L.~Fernandes, Lie algebroids, holonomy and characteristic classes, \emph{Adv.\ Math.}\ \textbf{170} (2002), no. 1, 119--179.
%\bibitem{RuiIvan} R.L.~Fernandes, I.~Struchiner, The classifying Lie algebroid of a geometric structure. I: Classes of coframes. \emph{Trans. Am. Math. Soc.} \textbf{366}, no. 5, 2419--2462 (2014).
\bibitem{IP} P.~Frejlich, I.~M\u{a}rcu\cb{t}, On dual pairs in Dirac geometry, \emph{Math. Z.},  \textbf{289} (2018), no. 1-2, 171-200.
%Math. Z. 289 (2018), 171–200.
\bibitem{Raj} A.~Gracia-Saz, R.~Mehta, Lie algebroid structures on double vector bundles and representation theory of Lie algebroids, \emph{Adv. Math.} \textbf{223} (2010), 1236--1275.
\bibitem{Gualtieri} M.~Gualtieri, Generalized complex geometry, \emph{Ann. of Math. (2)} \textbf{174} (2011), no.\ 1, 75--123.
%\bibitem{HiggMack} P.J.~Higgins, K.~Mackenzie, Algebraic constructions in the category of Lie algebroids, \emph{J. Algebra} \textbf{129}, (1990), no. 1, 194--230.
\bibitem{ILX} D.~Iglesias Ponte, C.~Laurent-Gangoux, P.~ Xu, Universal lifting theorem and quasi-Poisson groupoids, \emph{J.\ Eur.\ Math.\ Soc.\ (JEMS)}, \textbf{14}, no. 3, (2012), 681--731.
\bibitem{Kirillov} A.A.~Kirillov, Local Lie algebras, \emph{Usp. Mat. Nauk} \textbf{31} (1976) 57--76.
\bibitem{Yvette} Y.~Kosmann-Schwarzbach,  F.~Magri, Poisson-Nijnehuis structures, \emph{Ann. Inst. Henri Poincar\'e} \textbf{53} (1990), 35--81.
\bibitem{L-BM} D.~Li-Bland, E.~Meinrenken, On the van Est homomorphism for Lie groupoids, \emph{Enseign. Math.} \textbf{61} (2015), no. 1--2, 93--137.    
\bibitem{Lichnerowicz} A.~Lichnerowicz, Les vari\'et\'es de Jacobi et leurs alg\`ebres de Lie associ\'es, \emph{J. Math. Pures Appl.} (9) \textbf{57} (1978) 453--488.
%\bibitem{Lichnerowicz2} A.~Lichnerowicz, Sur les alg\`ebres de Lie locales de Kirillov�Shiga, \emph{C. R. Acad. Sci. Paris S\'er. I Math.} \textbf{296} (1983) 915--920.
 \bibitem{Mac} K.~Mackenzie, \emph{General theory of Lie groupoids and Lie algebroids}, Cambridge University Press, (2005).
\bibitem{Peta} F.~Petalidou, On the symplectic realization of Poisson-Nijenhuis manifolds, Preprint arXiv:1501.07830.
%\bibitem{Pradines0} J.~Pradines, Th\'eorie de Lie pour les groupo\"ides
%diff\'erentiables, Relations entre propri\'et\'es locales et globales, Comptes rendus Acad.\ Sci.\ Paris \textbf{263} A-B (1966), A907--A910.
%\bibitem{Pradines} J.~Pradines, Th\'eorie de Lie pour les groupo\"ides
%diff\'erentiables, Calcul diff\'erentiel dans la cat\'egorie
%des groupo\"ides infinit\'esimaux, Comptes rendus
%Acad.\ Sci.\ Paris \textbf{264} A (1967), 245--248.
%\bibitem{Pradines2} J.~Pradines, Troisi\`eme th\'eor\`eme de Lie pour les groupo\"ides diff\'erentiables, Comptes rendus Acad.\ Sci.\ Paris
%\textbf{267}, s\'erie A (1968), 21--23.
\bibitem{Mtesis} M.A.~Salazar, \emph{Pfaffian groupoids}, Ph.D. thesis, Utrecht University, 2013.
\bibitem{StiXu} M.~Sti\'enon, P.~Xu, Poisson quasi-Nijenhuis manifolds, \emph{Comm. Math. Phys.} \textbf{270} (2007), no.\ 3, 709--725.
%\bibitem{vecT} K.~Yano, S.~Ishihara, {\em Tangent and cotangent bundles}, Marcel
%Dekker, Inc., New York, 1973.
\bibitem{Ori} O.~Yudilevich, The role of the Jacobi identity in solving the Maurer-Cartan structure equation, \emph{Pacific J.\ Math.}\ \textbf{282} (2016), no.\ 2, 487--510.
\end{thebibliography}
\end{document}